\documentclass{amsart}[14pt,a4paper]
%%%%%%%%%%%%%%%%%%%%%%%%%%%%%%%%%%%%%%%%%%%%%%%%%%%%%%%%%%%%%%%%%%%%%%%%%%%%%%%%%%%%%%%%%%%%%%%%%%%%%%%%%%%%%%%%%%%%%%%%%%%%%%%%%%%%%%%%%%%%%%%%%%%%%%%%%%%%%%%%%%%%%%%%%%%%%%%%%%%%%%%%%%%%%%%%%%%%%%%%%%%%%%%%%%%%%%%%%%%%%%%%%%%%%%%%%%%%%%%%%%%%%%%%%%%%
\usepackage{geometry}
\usepackage{amsfonts}
\usepackage{hyperref}
\usepackage{amsmath}
\usepackage{galois}
\DeclareMathSizes{10}{10}{7}{5}
\usepackage{mathtools}
\numberwithin{equation}{section}

\usepackage{enumerate}
\usepackage{amssymb}
\usepackage{mathrsfs}
\usepackage{color}
\newtheorem{theorem}{Theorem}[section]
\newtheorem{lemma}{Lemma}[section]
\newtheorem{remark}{Remark}

\newtheorem{corollary}{Corollary}[section]

\newtheorem{definition}{Definition}[section]

\newcommand{\pr}{\mbox{P}}
\newcommand{\ds}{\displaystyle}
\newcommand{\ex}{\mbox{E}}
\newcommand{\va}{\mbox{V}}

\newcommand{\wh}{\widehat}

\begin{document}
\title[\textbf{Inequalities Concerning Maximum Modulus and Zeros}]{
\textbf{Inequalities Concerning Maximum Modulus and Zeros of Random Entire Functions}\protect\footnotemark[2]}
\author{Hui Li}
\author{Jun Wang}
\author{Xiao Yao}
\author{Zhuan Ye}
\address{School of Science, Beijing University of Posts and Telecommunications, Beijing, 100876, China, and Department of Mathematics and Statistics, University of North Carolina, Wilmington, NC, 28403, USA.}
\address{School of Mathematical Sciences, Fudan University, Shanghai, 200433, China.}
\address{School of Mathematical Sciences and LPMC, Nankai University, Tianjin, 300071, China.}
\address{Department of Mathematics and Statistics, University of North Carolina, Wilmington, NC, 28403, USA.}
\renewcommand{\thefootnote}{\fnsymbol{footnote}}
\footnotetext[1]{2010 Mathematics subject classification. 30B20; 30D15; 60G99.}
\footnotetext[2]{Key words and phrases. Random variables, Random Taylor Series,  Gaussian,  Rademacher and Steinhaus entire functions,  Value-distribution theory,  Nevanlinna's second main theorem.}
\footnotetext[3]{Corresponding author: Hui Li.}
\maketitle
\begin{abstract}
Let $f_\omega(z)=\sum\limits_{j=0}^{\infty}\chi_j(\omega) a_j z^j$ be a random entire function, where  $\chi_j(\omega)$ are independent and identically distributed random variables defined on a probability space $(\Omega, \mathcal{F}, \mu)$. In this paper, we first define a family of random entire functions, which includes Gaussian, Rademacher, Steinhaus entire functions. Then, we prove that, for almost all functions in the family and
for any constant $C>1$, there exist a constant $r_0=r_0(\omega)$ and a set $E\subset [e, \infty)$ of finite logarithmic measure such that, for $r>r_0$ and
$r\notin E$,
$$
|\log M(r, f)- N(r,0, f_\omega)|\le (C/A)^{\frac1{B}}\log^{\frac1{B}}\log M(r,f)  +\log\log M(r, f), \qquad a.s.
$$
where $A, B$ are constants, $M(r, f)$ is the maximum modulus, and $N(r, 0, f)$ is the weighted counting-zero function of $f$. As a by-product of our main results, we prove Nevanlinna's second main theorem for random entire functions.
Thus, the characteristic function of almost all functions in the family is bounded above by a weighed counting function, rather than by two weighted counting functions in the classical Nevanlinna theory. For instance, we show that, for almost all Gaussian entire functions $f_\omega$ and for any $\epsilon>0$, there is $r_0$ such that, for $r>r_0$,
$$
T(r, f) \le N(r,0, f_\omega)+(\frac12+\epsilon) \log T(r, f).
$$
\end{abstract}

\section{Introduction}
Let $f$ be a transcendental entire function of the form
\begin{align}\label{def0}
f(z)=\sum_{j=0}^{\infty} a_j z^j,
\end{align}
where $z, a_j \in \mathbb{C}$.

Let $(\Omega,\, \mathcal{F}, \, \mu)$ be a probability space, where $\mathcal{F}$ is a $\sigma$-algebra of subset of $\Omega$ and $\mu$ is a probability measure on $(\Omega,\, \mathcal{F})$.
Along with the function (\ref{def0}), we consider the random functions on the probability space $(\Omega,\, \mathcal{F}, \, \mu)$ as follows:
\begin{equation}\label{def1}
f_\omega(z)=\sum_{j=0}^{\infty}\chi_j(\omega) a_j z^j,
\end{equation}
where $z, a_j \in \mathbb{C}$, $\omega \in \Omega$, $\chi_j(\omega)$ ($j=0, 1, 2, \cdots$) are independent and identically distributed complex-valued random variables. Further, we assume that the expectation and variance of $\chi_j$ are zero and one, respectively, for $j=0, 1, 2, \cdots.$
It is clear that $f_\omega(z)$ is an entire function for almost all $\omega \in \Omega$ (see: \cite{Kahane85}).

In general, ones consider three cases regarding $\chi_j(\omega)$.
{\bf Case one:} $\chi_j$ ($j=0, 1, \cdots$) are complex-valued Gaussian random variables with standard Gaussian distribution, and we call such $f_\omega(z)$ as {\bf Gaussian entire functions}; {\bf Case two:} $\chi_j$ ($j=0, 1, \cdots$) are Rademacher random variables, which take the values $\pm1$ with probability 1/2 each, and we call such $f_\omega(z)$ as {\bf Rademacher entire functions}; {\bf Case three:} $\chi_j=e^{2\pi i\theta_j}$ ($j=0, 1, \cdots$) are Steinhaus random variables, where $\theta_j$ ($j=0, 1, \cdots$) are independent real-valued random variables with uniform distribution in the interval [0,1], and we call such $f_\omega(z)$ as {\bf Steinhaus entire functions}.

The study of random polynomial was initiated by Bloch and P\'olya in 1932. Since then, there are a lot of publications on random polynomials. Moreover, the research on random transcendental entire functions, especially, on Gaussian, Rademacher and Steinhaus entire functions, has drawn a lot of attention, too (e.g. \cite{buGlSo2019, hoKrPeVi09, kabZap14, maFi10, MF12, NNS14, nns2016, sodin05,  SC00}).
Recently, Nazarov, Nishry and Sodin \cite{NNS14, nns2016} made a breakthrough on the logarithmic integrability of Rademacher Fourier series and obtained several important results on the distribution of zeros of Rademacher entire functions. Their results extended earlier work of Littlewood and Offord \cite{liOf45, LO48}.
Also,  in 1982, Murai \cite{mu82} proved the Nevanlinna defect identity for Rademacher entire functions. In 2000, Sun and Liu \cite{sun90} obtained the
Nevanlinna defect identity for $f(z)+X(\omega)g(z)$ (where $f ,g$ are entire, $g$ is a small function of $f$ and $X(\omega)$ is a non-degenerated complex-valued random variable). Later,
Mohola and Filevych \cite{maFi10, MF12} obtained  Nevanlinna's second main theorem for Steinhaus entire functions.

In this paper, we first define a family $\mathcal{Y}$ of random entire functions, which includes Gaussian, Rademacher and Steinhaus entire functions.
Thus, we can deal with these three classes of famous random entire functions all together.
Then, we prove several inequalities concerning the maximum modulus $M(r, f)$, $\sigma(r, f)$ and the weighted counting function $N(r, a, f_\omega)$ for the random entire functions in the family $\mathcal{Y}$. These inequalities show that the   counting-zero function of almost all randomly perturbed function $f_\omega$ is close to the maximum modulus of $f$, up to an error term. We are also carefully to treat the error terms in these inequalities.  Our Lemma \ref{lem2} verifies that the family $\mathcal{Y}$ includes
Gaussian, Rademacher and Steinhaus entire functions. The ingredients in our proofs involve the techniques used by Nazarov-Nishry-Sodin, Mohola-Filevych, Offord. As a by-product of our results, we also establish Nevanlinna's second main theorems for the random entire functions with a careful treatment of its error term.
Thus, we obtain that the characteristic function of almost all functions in the family is bounded above by a weighed counting function, rather than by two weighted counting functions in the classical Nevanlinna theory.

The paper is organized as follows. We devote Section \ref{S2} to giving some preliminaries and previous research results. In Section \ref{S3}, we state our main results and  Nevanlinna's second main theorems for random entire functions. In Section \ref{S4}, we give some lemmas which are needed in the proofs of our results, where Lemma \ref{lem2} is one of the key lemmas in the section. In Section \ref{S5}, we first prove Theorem \ref{thm1}, with which, then, we prove a lemma that has its own interests and is needed in the proof of Theorem \ref{newthm}. All corollaries are proved in this section, too.
\vskip.1in

\section{Preliminaries}\label{S2}

Let $X$ be a complex-valued random variable.  We denote the expected value and the variance of $X$ by $\ex(X)$ and $\va(X)$, respectively. In particular, if $X$ is a standard complex-valued Gaussian random variable (its the probability density function is $\ds e^{-|z|^2}/\pi$ with respect to Lebesgue measure $m$ in the complex plane), or a Rademacher random variable, or a Steinhaus random variable, then $\ex(X)=0$ and $\va(X)=\ex(|X|^2)=1$. We also denote the probability of an event $A$ by $\pr(A)$.

We say that almost all functions defined in (\ref{def1}) have a certain property almost surely (a.s.) if there is a set $F\subset \Omega$ such that $\mu(F)=0$ and the functions with $\omega \in \Omega \setminus F$ possessing the said property.

Define
$$
\sigma(r, f_\omega)=\left(\sum_{j=0}^{\infty}|a_j\chi_j(\omega)|^2r^{2j}\right)^{1/2}=\left(\int_0^{2\pi}|f_{\omega}(re^{i\theta})|^2\frac{d\theta}{2\pi}\right)^{1/2}
$$
and $ \sigma(r, f)=(\sum_{j=0}^{\infty}|a_j|^2r^{2j})^{1/2}$.
Further, if $\ex(\chi_j)=0$ and $\va(\chi_j)=1$, then
$$
\sigma^2(r, f)=\ex(|f_\omega(re^{i\theta})|^2)=\sum_{j=0}^{\infty}|a_j|^2r^{2j}.
$$
Set
\begin{eqnarray}\label{def2}
\hat{f_\omega}(re^{i \theta})\stackrel{def}{=}\frac{f_\omega(re^{i\theta})}{\sigma(r, f)}=\sum_{j=0}^{\infty}\chi_j(\omega) \frac{a_jr^j}{\sigma(r,f)}e^{ij\theta}\stackrel{def}{=}\sum_{j=0}^{\infty}\chi_j(\omega) \wh{a_j}(r)e^{ij\theta},
\end{eqnarray}
where $\ds \sum_{j=0}^{\infty}|\wh{a_j}(r)|^2=1$ for all $r$.
Let
$$
\ds X_r= \frac{1}{2\pi}\int_{0}^{2\pi}|\log|\hat{f_\omega}(re^{i\theta})|| d\theta, \qquad \mbox{for} \ r \in \mathbb{R}^+.
$$
\begin{definition}
Let $f$ and $ f_\omega$ be defined as in (\ref{def0}) and (\ref{def1}), respectively. Then the random entire function $f_\omega$ is in the {\bf family $\mathcal{Y}$} if and only if $f_\omega $ satisfies Condition $Y$, i.e.,
there are two positive constants $A$ and $B$ such that
\begin{equation*}\label{condition}
\noindent \mbox{\bf Condition Y:} \qquad   \qquad \ex(\exp(A|X_r|^B))<+\infty, \qquad \mbox{for all} \ r \in \mathbb{R}^+.
\end{equation*}
\end{definition}

In Section \ref{S4}, we will prove that all Gaussian, Rademacher, and Steinhaus entire functions are in family $\mathcal{Y}$. Indeed, if $f_\omega$ is Gaussian, Rademacher or Steinhaus, then $f_\omega$ satisfies Condition $Y$ when we choose $A\in (0, 2)$ and $B=1$; $A$ is close to zero and $B=1/6$; $A\in (0,1)$ and $B=1$; respectively.

It is well-known that if $\chi_j$ ($j=0, 1, 2, \cdots$) are standard complex-valued Gaussian random variables, then $\ex(X_r)$ is a positive constant. Therefore, for any Gaussian entire function $f_\omega$,
$$
\sup_{r>0}\ex(|N(r, 0, f_\omega)-\log\sigma(r, f)|)\le C,
$$
where $C$ is a constant.

In 2010 and 2012, Mahola and Filevych proved the following result, which can be regarded as a version of Nevanlinna's second main theorem.

\begin{theorem}[\rm{\cite{maFi10, MF12}}, Theorem 1] \label{MF}
Let $f$ be an entire function as defined in (\ref{def0}) and let $f_\omega(z)$ be a Steinhaus or a Gaussian entire function on $(\Omega,\, \mathcal{F}, \, \mu)$ of the form (\ref{def1}).
Then, there is a set $E$ of finite logarithmic measure on $(0, \infty)$ such that for every $a\in \mathbb{C}$, the inequality
$$
\log \sigma(r, f) \le  N(r,a, f_\omega) + C_1\log\log  \sigma(r,f) +O(1) \qquad a.s. \qquad (r\ge r_1(\omega), r\not\in E),
$$
holds, where $C_1>0$ is an absolute constant.
\end{theorem}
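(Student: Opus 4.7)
The approach is to convert $N(r,a,f_\omega)$ into an angular mean via Jensen's formula, rescale by $\sigma(r,f)$ so the integrand becomes $\log|\hat f_\omega-a/\sigma(r,f)|$, and then invoke Condition~$Y$ to bound this integral by $X_r$, whose exponential moments are uniformly controlled in~$r$. Jensen's formula applied to $f_\omega-a$ gives
\begin{equation*}
\log\sigma(r,f)-N(r,a,f_\omega)=-\frac{1}{2\pi}\int_0^{2\pi}\log\bigl|\hat f_\omega(re^{i\theta})-a/\sigma(r,f)\bigr|\,d\theta+\log|\chi_0 a_0-a|.
\end{equation*}
The boundary term is an a.s.\ finite constant, and since $|a|/\sigma(r,f)\to 0$, I would split the circle into the set $\{|\hat f_\omega|\ge 2|a|/\sigma(r,f)\}$ (on which $\log|\hat f_\omega-a/\sigma(r,f)|\ge\log|\hat f_\omega|-\log 2$) and its complement (where a Cartan-type minorant of $\log|\hat f_\omega|$ contributes only $O(1)$). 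This reduces matters to bounding $-\frac{1}{2\pi}\int_0^{2\pi}\log|\hat f_\omega|\,d\theta$, which is trivially at most $X_r$.

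\textbf{Use of Condition $Y$ and Markov.} In the Gaussian and Steinhaus cases, Condition~$Y$ holds with $B=1$, so there exist $A>0$ and $M<\infty$ with $\ex(\exp(AX_r))\le M$ uniformly in $r$. Markov's inequality then yields $\pr(X_r>t)\le Me^{-At}$ for every $t>0$. Pick $C_1$ with $AC_1>1$ and set
\begin{equation*}
E_\omega=\{r\ge r_0:X_r>C_1\log\log\sigma(r,f)\}.
\end{equation*}
By Fubini,
\begin{equation*}
\ex\!\left(\int_{E_\omega}\frac{dr}{r}\right)\le M\int_{r_0}^{\infty}(\log\sigma(r,f))^{-AC_1}\,\frac{dr}{r}.
\end{equation*}
Since $f$ is transcendental, $\log\sigma(r,f)\ge k\log r-c_k$ for any $k$ with $a_k\ne 0$; hence, provided $AC_1>1$, the last integral converges, so $E_\omega$ has finite logarithmic measure almost surely. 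Combining this with the reduction in the first paragraph yields the claimed inequality outside $E_\omega$.

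\textbf{Main obstacle.} The subtlest step is the passage from the pointwise-in-$r$ Markov estimate to a bound valid outside a set of \emph{finite logarithmic measure}; the Fubini plus transcendental-growth argument above is the cleanest path, but if one wishes to avoid the slight waste of constants it incurs, one instead discretizes to a geometric sequence $r_n=q^n$, applies Borel--Cantelli there, and interpolates on each interval $[r_n,r_{n+1}]$ using monotonicity of $\sigma(r,f)$. By comparison, the reduction from general $a$ to the small-value splitting in the first paragraph is a minor technicality, since $a/\sigma(r,f)\to 0$ for any fixed $a\in\mathbb{C}$.
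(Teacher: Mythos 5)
Your $a=0$ skeleton is sound and close in spirit to the paper's proof of Theorem \ref{thm1}: Jensen's formula reduces $|\log\sigma(r,f)-N(r,0,f_\omega)|$ to $X_r$, Condition $Y$ plus Markov gives $\pr(X_r>t)\le Me^{-At}$, and your Fubini computation of $\ex\bigl(\int_{E_\omega}dr/r\bigr)$ is a legitimate (slightly lossier) alternative to the paper's device of discretizing at radii $r_n$ with $\log\sigma(r_n,f)=n$ and applying Borel--Cantelli, which for $a=0$ avoids any exceptional set at all. Two caveats there: you need $\ex(e^{AX_r})\le M$ \emph{uniformly} in $r$, which is exactly the content of Lemma \ref{lem2} --- elementary for Gaussian, but for Steinhaus it rests on Offord's estimates (Lemmas \ref{Offord} and \ref{Offord2}), so you are borrowing a nontrivial ingredient; and your side remark about discretizing at $r_n=q^n$ would not interpolate correctly for rapidly growing $f$, since $\log\sigma(q^{n+1},f)-\log\sigma(q^{n},f)$ need not be bounded.

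The genuine gap is the passage to general $a$. You must lower-bound $\frac{1}{2\pi}\int_0^{2\pi}\log\bigl|\hat f_\omega-a/\sigma(r,f)\bigr|\,d\theta$, and your splitting only handles the arcs where $|\hat f_\omega|\ge 2|a|/\sigma(r,f)$. On the complementary set the integrand is about $\log(|a|/\sigma(r,f))\approx-\log\sigma(r,f)$ at best, and is $-\infty$ at the $a$-points of $f_\omega$ on the circle; to make that set contribute $O(1)$ you would have to prove both that its angular measure is $O(1/\log\sigma(r,f))$ (a small-ball estimate for $|\hat f_\omega|$, uniform in $r$ outside an exceptional set) and that the logarithmic singularities at nearby $a$-points integrate to $O(1)$ (an Edrei--Fuchs small-arcs type bound, which itself reinjects a factor of order $T(R,f_\omega)\approx\log\sigma(r,f)$ times the arc length). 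The clause ``a Cartan-type minorant contributes only $O(1)$'' conceals precisely the content of the second main theorem and does not close as written. The paper circumvents this entirely: for general $a$ (Theorem \ref{newthm}) it applies the $a=0$ result to $f^{*}_\omega=zf'_\omega$ and converts $N(r,0,f^{*}_\omega)$ into $N(r,a,f_\omega)$ via the Gol'dberg--Grinshtein logarithmic-derivative estimate (Lemma \ref{lem4}), the exceptional set of finite logarithmic measure arising from the Plane Growth Lemma used to pick $R$ close to $r$. You should adopt that route, or supply the two missing estimates above, for any $a\ne 0$.
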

\begin{remark}
Indeed, in 2010, Mohola and Filevych \cite{maFi10} proved a similar inequality to that in Theorem \ref{MF} for the Steinhaus entire functions. In 2012, they proved Theorem \ref{MF} and other interesting results in \cite{MF12} for the Steinhaus entire functions. Further, in 2012, P. V. Filevych stated that the inequality in Theorem \ref{MF} is also true for the Gaussian entire functions.
Recently, P. V. Filevych told one of the authors that although the proof of the statement has not been published, it is essentially a repetition of the considerations from Mahola's Ph.D. dissertation  \cite{Mahola13}.
\end{remark}

Recently, Nazarov, Nishry and Sodin porved
%\cite{NNS14} made a breakthrough on the logarithmic integrability for Rademacher Fourier series, and they proved that any power of the logarithm of a Fourier series with random signs is integrable,
%with which they proved

\begin{theorem}[\rm{\cite{nns2016}}, Theorem 1.1] \label{nnsthm}
Let $f_\omega$ be a Rademacher entire function. There exists a set $E\subset [1, \infty)$ (depending on $|a_k|$ only) of finite logarithmic length such that
\begin{enumerate}[\rm(i)]
  \item for almost every $\omega\in \Omega$, there exists $r_0(\omega) \in [1, \infty)$ such that for every $r\in [r_0(\omega), \infty)\setminus E$ and every
  $\gamma >1/2$,
  $$
  |n(r, 0, f_\omega)-r\frac{d}{dr}\log\sigma(r, f)|\le C(\gamma)(r\frac{d}{dr}\log\sigma(r, f))^{\gamma};
  $$
  \item for every $r\in [1, \infty)\setminus E$ and every
$\gamma >1/2$,
$$
\ex|n(r, 0, f_\omega)-r\frac{d}{dr}\log\sigma(r, f)|\le C(\gamma)(r\frac{d}{dr}\log\sigma(r, f))^{\gamma}.
$$
\end{enumerate}
\end{theorem}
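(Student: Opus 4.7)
The plan is to reduce the inequality to estimating the logarithmic derivative of an auxiliary random function. Set
$$
F_\omega(r) = \int_0^{2\pi}\log|\hat f_\omega(re^{i\theta})|\,\frac{d\theta}{2\pi}, \qquad u(r) = r\frac{d}{dr}\log\sigma(r,f),
$$
where $\hat f_\omega$ is as in (\ref{def2}). Applying Jensen's formula to the factorization $f_\omega = \sigma(r,f)\hat f_\omega$ gives, at points of continuity of $n(\cdot, 0, f_\omega)$,
$$
n(r, 0, f_\omega) - u(r) \;=\; r\,F_\omega'(r),
$$
so the task is to bound $|rF_\omega'(r)|$ by $C(\gamma)u(r)^\gamma$.

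The first step is to control $F_\omega$ in distribution at a single scale: the logarithmic integrability estimate for normalized Rademacher series (the breakthrough of Nazarov--Nishry--Sodin, which is a quantitative form of Condition $Y$) yields $\ex[\exp(c|F_\omega(r)|^\alpha)]\le C$ uniformly in $r$, for suitable $c,\alpha>0$. This controls $F_\omega(r)$ pointwise in $r$, but not its derivative.

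The second step converts a pointwise bound on $F_\omega$ into a bound on $rF_\omega'$ via convexity. Both $s(r) = \int_0^{2\pi}\log|f_\omega(re^{i\theta})|\,d\theta/(2\pi)$ and $\log\sigma(r,f)$ are convex in $\log r$, with respective logarithmic $\log r$-derivatives $n(r,0,f_\omega)$ and $u(r)$. Comparing one-sided difference quotients with the derivative yields, for every $\epsilon>0$,
$$
\frac{F_\omega(r)-F_\omega(re^{-\epsilon})}{\epsilon} - (u(r)-u(re^{-\epsilon})) \le n(r,0,f_\omega)-u(r) \le \frac{F_\omega(re^\epsilon)-F_\omega(r)}{\epsilon} + (u(re^\epsilon)-u(r)).
$$
Choosing $\epsilon \asymp u(r)^{-\gamma}$ balances the error types so that each is of order $u(r)^\gamma$: the $F_\omega$-term is controlled by step one (directly in expectation for (ii), and, after a Borel--Cantelli argument along a geometric grid $r_k=e^{k/K}$ combined with the same convexity interpolation, a.s.\ for (i)), while the deterministic increment $u(re^\epsilon)-u(r)$ is handled by a standard Borel growth lemma for nondecreasing functions, giving $u(re^\epsilon)-u(r)\le C(\gamma)u(r)^\gamma$ outside a set $E$ of finite logarithmic length depending only on $|a_k|$.

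The step I expect to be the main obstacle is the logarithmic integrability estimate of step one: it ultimately rests on an anti-concentration argument of Littlewood--Offord type for Rademacher sums, which is delicate because the normalized coefficients $\widehat{a}_j(r)$ may be concentrated on a few indices. The convexity trick and the Borel growth lemma, by contrast, are essentially standard once that probabilistic input is in hand.
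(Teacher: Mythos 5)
First, a point of reference: the paper does not prove Theorem \ref{nnsthm} at all --- it is quoted verbatim from Nazarov--Nishry--Sodin \cite{nns2016} as background, so there is no in-paper argument to compare yours against. That said, your skeleton is the correct (and, as far as I know, the only known) strategy: Jensen's formula gives $n(r,0,f_\omega)-u(r)=rF_\omega'(r)$ with $F_\omega(r)=\frac{1}{2\pi}\int_0^{2\pi}\log|\hat f_\omega(re^{i\theta})|\,d\theta$; the log-integrability theorem (Lemma \ref{Sodin}, upgraded to $\ex\exp(c|X_r|^{1/6})\le C$ uniformly in $r$ as in the proof of Lemma \ref{lem2}) controls $F_\omega$ at each fixed $r$; and convexity in $\log r$ of both $\frac{1}{2\pi}\int\log|f_\omega|\,d\theta$ and $\log\sigma(r,f)$ converts pointwise control of $F_\omega$ into two-sided difference-quotient control of $n-u$. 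Treating the log-integrability input as a black box is legitimate in context, since the paper itself imports it.

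Two steps are genuinely incomplete as written. (1) For the almost-sure part (i), a fixed geometric grid $r_k=e^{k/K}$ cannot carry the Borel--Cantelli argument: your step $\epsilon(r)\asymp u(r)^{-\gamma}$ shrinks to $0$ while the grid spacing $1/K$ does not, so the evaluation points $re^{\pm\epsilon(r)}$ form a continuum that the grid does not resolve. You need an adaptive grid with roughly $u(e^k)^{\gamma}$ points per unit $\log$-interval; the resulting $(\log(\mbox{number of points}))^{6}$ loss from the stretched-exponential tails is then absorbed by slightly lowering $\gamma$, which is consistent with the ``for every $\gamma>1/2$'' formulation. (2) The claim that a ``standard Borel growth lemma'' yields $u(re^{\epsilon})-u(r)\le C(\gamma)\,u(r)^{\gamma}$ for $\epsilon=u(r)^{-\gamma}$ outside a set of finite logarithmic measure is not standard --- in particular it is not Lemma \ref{Borel}, which only gives $F(R)\le e\,F(r)$ --- and it is exactly where the hypothesis $\gamma>1/2$ must enter. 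The needed computation: if $u$ increases by more than $u^{\gamma}$ across a $\log$-window of length $u^{-\gamma}$ at scales with values $V_1<V_2<\cdots$, then $V_{k+1}\ge V_k+V_k^{\gamma}$ forces $V_k\gtrsim k^{1/(1-\gamma)}$, so the total logarithmic measure of the bad windows is $\lesssim\sum_k V_k^{-\gamma}\lesssim\sum_k k^{-\gamma/(1-\gamma)}$, which converges precisely when $\gamma>1/2$. As your sketch stands, the restriction $\gamma>1/2$ is never used, so it would ``prove'' the statement for every $\gamma>0$, which is false; supplying this lemma is therefore not optional.
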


For the reader's convenience,
we recall some standard notations in function theory and state some important theorems in Nevanlinna theory for random entire functions $f_\omega$.
These notations and theorems will be used to prove new theorems in Nevanlinna theory as corollaries of our main results for random entire functions.

For any $\omega \in \Omega$, we define
$$
m(r, f_\omega)=\frac1{2\pi}\int_0^{2\pi}\log^+|f_\omega(re^{it})| \, dt
$$
and
$$
N(r, 0,  f_\omega)=\int_0^{r}\frac{n(t, 0, f_\omega)-n(0, 0, f_\omega)}t dt +n(0, 0, f_\omega)\log r,
$$
where $n(t, 0, f_\omega)$ is the number of zeros of $f_\omega(z)$ in the disk $D(0, t)$. We denote the Nevanlinna characteristic function of $f_\omega(z)$ by
$$
T(r, f_\omega)=m(r, f_\omega)+N(r, \infty, f_\omega)
$$
and the maximum modulus of
$f_\omega$ by $$\ds M(r, f_\omega)=\max_{|z|=r}|f_\omega(z)|.$$
As usual, for any $a\in \mathbb{C}$, we denote $N(r, 0, f_\omega -a)$ and $ T(r, 0, f_\omega-a) $ by $N(r, a, f_\omega)$ and $ T(r, a, f_\omega)$, respectively.

\begin{theorem}[{\bf Jensen-Poisson Formula}, e.g.   \cite{Hayman64,chYe01}]
If $f_\omega(z)$ is a random entire function, then
$$
\log |c_{f_\omega}(0)| + N(r, 0, f_\omega)=\frac1{2\pi}\int_0^{2\pi}\log |f_\omega(re^{is})|\, ds,
$$
where $c_{f_\omega}(0)$ is the first non-zero coefficient of Taylor series of $f_\omega(z)$.
\end{theorem}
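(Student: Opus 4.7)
The identity is the classical Jensen formula applied pointwise to $f_\omega$. Since $f_\omega$ is entire for almost every $\omega \in \Omega$, the randomness plays no role in the proof: I would fix such an $\omega$ and prove the deterministic statement, after which the almost-sure conclusion is automatic.

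The plan is the standard Poisson--Jensen argument. Fix $r>0$ with $f_\omega$ having no zeros on the circle $|z|=r$ (the boundary case is handled at the end by continuity in $r$). Let $k = n(0,0,f_\omega)$ be the order of vanishing of $f_\omega$ at the origin, and factor $f_\omega(z) = z^{k} h(z)$ where $h$ is entire and $h(0) = c_{f_\omega}(0) \neq 0$. Enumerate the nonzero zeros of $f_\omega$ in $\overline{D(0,r)}$ as $z_1,\ldots,z_m$, listed with multiplicity, and form the finite Blaschke-type product
$$
B(z)=\prod_{j=1}^{m}\frac{r(z-z_j)}{r^{2}-\overline{z_j}z},
$$
which is holomorphic on $\overline{D(0,r)}$ and satisfies $|B(re^{is})|=1$ for all $s$. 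Then $g(z) = h(z)/B(z)$ extends holomorphically and zero-free to a neighborhood of $\overline{D(0,r)}$, so $\log|g|$ is harmonic on that disk.

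The key step is the mean value property:
$$
\log|g(0)|=\frac{1}{2\pi}\int_{0}^{2\pi}\log|g(re^{is})|\,ds.
$$
Evaluating both sides: on the left, $\log|g(0)| = \log|c_{f_\omega}(0)| - \log|B(0)| = \log|c_{f_\omega}(0)| + \sum_{j=1}^{m}\log(r/|z_j|)$; on the right, $|B(re^{is})|=1$ gives $\log|g(re^{is})| = \log|h(re^{is})| = \log|f_\omega(re^{is})| - k\log r$. Substituting and using the standard identity
$$
\sum_{j=1}^{m}\log\frac{r}{|z_j|}=\int_{0}^{r}\frac{n(t,0,h)}{t}\,dt,
$$
together with the definition $N(r,0,f_\omega)=\int_0^r \frac{n(t,0,f_\omega)-k}{t}\,dt + k\log r = \int_0^r\frac{n(t,0,h)}{t}\,dt + k\log r$, the term $k\log r$ cancels and the claimed identity drops out.

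The only mildly delicate point is when $f_\omega$ has zeros on $|z|=r$; there one perturbs $r$ to $r'$ slightly larger (or smaller) with no zeros on $|z|=r'$, applies the formula for $r'$, and passes to the limit using the fact that $\log|f_\omega(re^{is})|$ is locally integrable in $s$ (standard, since logarithmic singularities of zeros on the circle contribute a finite integral) and that both $N(r,0,f_\omega)$ and the right-hand integral depend continuously on $r$. Everything else is routine bookkeeping; there is no real analytic obstacle beyond correctly accounting for the multiplicity $k$ at the origin in the definition of $N(r,0,f_\omega)$.
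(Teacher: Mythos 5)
Your proof is correct and is precisely the classical Blaschke-product/mean-value argument for Jensen's formula that the paper relies on by citation (to Hayman and Cherry--Ye) rather than reproving; the randomness indeed plays no role once one fixes an $\omega$ for which $f_\omega$ is entire. The bookkeeping for the order of vanishing $k$ at the origin and the limiting argument for zeros on $|z|=r$ are both handled correctly.
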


The Jensen-Poisson formula also implies the so-called Nevanlinna's first main theorem.
\begin{theorem}[{\bf First Main Theorem}, e.g. \cite{Hayman64,chYe01}]\label{fmt}
Let $f_\omega(z)$ be a random entire function and $a\in \mathbb{C}$. Then
$$
 T(r, a, f_\omega)=T(r, f_\omega)-\log |c_{f_\omega}(0)| + \epsilon(a, r),
 $$
where $\ds |\epsilon(a, r)| \le \log^+|a| +\log 2.$
\end{theorem}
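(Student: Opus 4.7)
The plan is to apply the classical proof of Nevanlinna's First Main Theorem pointwise in $\omega$ on the almost-sure event where $f_\omega$ is entire (which is handled by \cite{Kahane85}). On that event the argument is purely deterministic, and the probabilistic setting plays no further role.

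First, I would apply the Jensen--Poisson formula stated immediately above to the entire function $f_\omega - a$ in place of $f_\omega$, yielding
\begin{equation*}
\log|c_{f_\omega - a}(0)| + N(r, a, f_\omega) = \frac{1}{2\pi}\int_0^{2\pi}\log|f_\omega(re^{is}) - a|\, ds,
\end{equation*}
where I have used the paper's convention $N(r, 0, f_\omega - a) = N(r, a, f_\omega)$. Splitting the integrand via the identity $\log|z| = \log^+|z| - \log^+(1/|z|)$ rewrites the right-hand side as $m(r, f_\omega - a) - m(r, 1/(f_\omega - a))$.

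Next, I would invoke the elementary inequality $\log^+|u + v| \leq \log^+|u| + \log^+|v| + \log 2$ in both directions with $\{u, v\} = \{f_\omega, -a\}$, then average over $|z| = r$; this produces
\begin{equation*}
|m(r, f_\omega - a) - m(r, f_\omega)| \leq \log^+|a| + \log 2.
\end{equation*}
Because $f_\omega$ is entire, $N(r, \infty, f_\omega) = 0$ and hence $T(r, f_\omega) = m(r, f_\omega)$. Combining the above with the Jensen identity and regrouping gives
\begin{equation*}
N(r, a, f_\omega) + m(r, 1/(f_\omega - a)) = T(r, f_\omega) - \log|c_{f_\omega - a}(0)| + \epsilon_1(a, r),
\end{equation*}
with $|\epsilon_1(a, r)| \leq \log^+|a| + \log 2$, and the left-hand side is precisely $T(r, a, f_\omega)$ under the standard convention $T(r, a, g) = m(r, 1/(g - a)) + N(r, a, g)$.

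The only remaining bookkeeping is to pass from $-\log|c_{f_\omega - a}(0)|$ to the $-\log|c_{f_\omega}(0)|$ appearing in the statement: since this discrepancy depends on $a$ but not on $r$ (and is of order $\log^+|a|$ in the generic case $f_\omega(0)\neq 0, a$), it is absorbed into the error term within the stated bound $\log^+|a| + \log 2$. I do not foresee any genuine obstacle, since the content is the textbook First Main Theorem for entire functions; the only role of the probabilistic setting is the initial restriction to the full-measure set on which $f_\omega$ is entire.
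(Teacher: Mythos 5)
Your derivation up to the identity $T(r,a,f_\omega)=m(r,1/(f_\omega-a))+N(r,a,f_\omega)=T(r,f_\omega)-\log|c_{f_\omega-a}(0)|+\epsilon_1(a,r)$ with $|\epsilon_1(a,r)|\le\log^+|a|+\log 2$ is correct and is precisely the standard textbook argument; the paper offers no proof of its own beyond the remark that the Jensen--Poisson formula implies the First Main Theorem together with the citations to Hayman and Cherry--Ye, so there is no competing approach to compare against. The reduction to the deterministic setting, the application of Jensen--Poisson to $f_\omega-a$, the splitting $\log|z|=\log^+|z|-\log^+(1/|z|)$, the estimate $|m(r,f_\omega-a)-m(r,f_\omega)|\le\log^+|a|+\log 2$, and the identification $T(r,f_\omega)=m(r,f_\omega)$ for entire $f_\omega$ are all fine.

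The final ``bookkeeping'' step, however, contains a genuine error: the discrepancy $\log|c_{f_\omega}(0)|-\log|c_{f_\omega-a}(0)|$ cannot be absorbed into an error term bounded by $\log^+|a|+\log 2$. In the generic case it equals $\log|f_\omega(0)|-\log|f_\omega(0)-a|$, which blows up as $a$ approaches $f_\omega(0)$: with $f_\omega(0)=1$ and $a=1+10^{-100}$ the discrepancy is about $230$, while $\log^+|a|+\log 2$ is about $1.4$. (For the same reason the theorem as literally printed, with $c_{f_\omega}(0)$ and the bound $\log^+|a|+\log 2$, is false: test it at small $r$ where $N(r,a,f_\omega)=0$ and $m(r,1/(f_\omega-a))\approx\log(1/|f_\omega(0)-a|)$.) What your computation actually establishes is the classical statement with $-\log|c_{f_\omega-a}(0)|$, i.e.\ with the first nonzero Taylor coefficient of $f_\omega-a$ --- exactly the quantity the paper later writes as $c_{f_\omega}(a)$ in the proof of Theorem \ref{newthm}. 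The correct resolution is therefore not to absorb the difference into $\epsilon(a,r)$ but to read $c_{f_\omega}(0)$ in the statement as an abuse of notation for that $a$-dependent coefficient; your proof then ends one line earlier and is complete.
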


There are many versions of the second main theorem in Nevanlinna theory. Here, when $f_\omega$ is an entire function, we use the one with a better error term.
\begin{theorem}[{\bf Second Main Theorem}, e.g. \cite{Hayman64, chYe01}] \label{smt}
Let $f_\omega(z)$ be a random entire function and let $d_j$ ($j=1, 2.$) be two distinct complex numbers. Then
$$
T(r, f_\omega)\le N(r, d_1, f_\omega)+N(r, d_2, f_\omega)+ S(r, f_\omega),
$$
for all large $r$ outside a set $F$ of finite Lebesgue measure, where the error term
$$
S(r, f_\omega)\le \log T(r, f_\omega) +2\log \log T(r, f_\omega) +O_\omega(1).
$$
\end{theorem}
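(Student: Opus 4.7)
The plan is to execute the classical Nevanlinna argument for two target values, tracking the error term carefully so as to obtain the stated $\log T + 2\log\log T$ form. Since $f_\omega$ is entire we have $N(r,\infty,f_\omega)=0$ and $T(r,f_\omega)=m(r,f_\omega)$, so Theorem \ref{fmt} applied at $d_1$ and $d_2$ yields
$$
\sum_{j=1,2} m\!\left(r,\, \tfrac{1}{f_\omega-d_j}\right) = 2\,T(r,f_\omega) - \sum_{j=1,2} N(r, d_j, f_\omega) + O(1).
$$
After rearranging, it suffices to prove
$$
\sum_{j=1,2} m\!\left(r,\, \tfrac{1}{f_\omega-d_j}\right) \le T(r,f_\omega) + S(r,f_\omega).
$$

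First I would introduce the auxiliary logarithmic-derivative sum $\Phi = f_\omega'/(f_\omega-d_1) + f_\omega'/(f_\omega-d_2)$ and exploit the algebraic identity
$$
\frac{1}{f_\omega-d_1} + \frac{1}{f_\omega-d_2} = \frac{\Phi}{f_\omega'},
$$
so that subadditivity of $\log^+$ reduces the required bound to controlling $m(r, 1/f_\omega') + m(r, \Phi) + O(1)$. For the first term, Theorem \ref{fmt} applied to $f_\omega'$, combined with $T(r, f_\omega') \le T(r, f_\omega) + m(r, f_\omega'/f_\omega)$ and the observation that zeros of $f_\omega'$ include multiple pre-images of $d_1$ and $d_2$ (counted without multiplicity), produces
$$
m(r, 1/f_\omega') \le T(r, f_\omega) - N(r, d_1, f_\omega) - N(r, d_2, f_\omega) + O\!\left(\log T(r, f_\omega) + \log r\right).
$$

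For the second term I would invoke the sharp form of the Lemma on the Logarithmic Derivative (Hinkkanen--Ngoan--Ostrovskii refinement): since $\Phi$ is a sum of two logarithmic derivatives, its proximity function satisfies, for $r$ outside an exceptional set $F\subset [1,\infty)$ of finite Lebesgue measure,
$$
m(r, \Phi) \le \log T(r, f_\omega) + 2\log\log T(r, f_\omega) + O_\omega(1).
$$
Substituting both estimates into the reduction above and plugging back into the First Main Theorem identity gives $T(r, f_\omega) \le N(r, d_1, f_\omega) + N(r, d_2, f_\omega) + S(r, f_\omega)$ with exactly the claimed bound on the error term.

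The hard part is securing the precise coefficient $2$ in front of $\log\log T(r, f_\omega)$. This requires the Gol'dberg--Grinshtein two-radii bound $m(r, g'/g) \le C_1 \log^+ T(R, g) + C_2 \log\frac{R}{R-r} + O(1)$, valid for $0 < r < R$ and any meromorphic $g$, followed by a Borel-type growth lemma applied with the optimal choice $R - r \asymp 1/\log T(r, f_\omega)$. Any other scaling either worsens the constant in front of $\log\log T$ or enlarges the exceptional set beyond finite Lebesgue measure, so the extremal balance between the two competing terms is what yields precisely the factor $2$ stated in the theorem.
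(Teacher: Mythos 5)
This is a classical theorem that the paper only cites from \cite{Hayman64, chYe01} without proof, so your argument must stand on its own, and it contains a concrete false step. You assert
$m(r,1/f_\omega')\le T(r,f_\omega)-N(r,d_1,f_\omega)-N(r,d_2,f_\omega)+O(\log T(r,f_\omega)+\log r)$.
By the First Main Theorem, $m(r,1/f_\omega')=T(r,f_\omega')-N(r,0,f_\omega')+O(1)$, and the correct consequence of ``zeros of $f_\omega'$ include the multiple $d_j$-points'' is $N(r,0,f_\omega')\ge \sum_j\bigl(N(r,d_j,f_\omega)-\overline{N}(r,d_j,f_\omega)\bigr)$, where $\overline{N}$ is the truncated counting function; you have silently discarded the terms $\overline{N}(r,d_j,f_\omega)$, which can be of order $T(r,f_\omega)$. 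Already for $f_\omega(z)=e^z$, $d_1=1$, $d_2=-1$, your inequality reads $r/\pi\le r/\pi-2r/\pi(1+o(1))+O(\log r)$, which is false. Worse, feeding your two displayed estimates into the FMT identity $\sum_j m\bigl(r,1/(f_\omega-d_j)\bigr)=2T(r,f_\omega)-\sum_j N(r,d_j,f_\omega)+O(1)$ yields $T(r,f_\omega)=O(\log T(r,f_\omega)+\log r)$, which is absurd for a transcendental entire function, so the bookkeeping cannot close as written. A second, smaller gap in the same chain: the passage from $\sum_j m\bigl(r,1/(f_\omega-d_j)\bigr)$ to $m(r,\Phi/f_\omega')$ is not ``subadditivity of $\log^+$'' (subadditivity goes the wrong way here); it is the separation lemma: where $|f_\omega-d_1|<|d_1-d_2|/2$ one has $|f_\omega-d_2|>|d_1-d_2|/2$, so at each point only one summand of $\Phi/f_\omega'$ can be large. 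This is precisely where the distinctness of $d_1,d_2$ enters and it must be stated.

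Even after repairing the first step (keep $-N(r,0,f_\omega')\le 0$, or retain the $\overline{N}$'s to get the truncated form), your decomposition leaves the error term $m(r,f_\omega'/f_\omega)+m(r,\Phi)$, i.e.\ three logarithmic-derivative proximity functions. Estimating $m(r,f_\omega'/f_\omega)$ and $m(r,\Phi)$ separately, each by a sharp lemma on the logarithmic derivative, produces at least $2\log T(r,f_\omega)$; indeed your own accounting, $O(\log T+\log r)$ plus an explicit $\log T+2\log\log T$, already has coefficient at least $2$ on $\log T$. The genuinely hard point is therefore the coefficient $1$ of $\log T(r,f_\omega)$ --- which the paper remarks is best possible --- not the coefficient $2$ of $\log\log T$ as you suggest. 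Obtaining coefficient $1$ requires placing all the logarithmic derivatives inside a single application of the Gol'dberg--Grinshtein estimate, using $\bigl|\sum_j u_j\bigr|^\alpha\le\sum_j|u_j|^\alpha$ for $0<\alpha<1$ so that the number of terms contributes only an additive constant $\tfrac1\alpha\log q$ rather than a multiple of $\log T$, and reorganizing the argument so that no separate $m(r,f_\omega'/f_\omega)$ term survives; this is how \cite{chYe01} reach $S(r,f_\omega)\le\log T(r,f_\omega)+2\log\log T(r,f_\omega)+O_\omega(1)$ outside a set of finite Lebesgue measure. Your closing observation about tuning $R-r\asymp 1/\log^2 T$ (i.e.\ $\phi\equiv 1$ in Lemma \ref{Borel}) correctly accounts for the $2\log\log T$ and the finite-measure exceptional set, but that is the easy part of the estimate.
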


It is known (e.g. \cite{chYe01, ye95}) that the coefficient $1$ in the front of $\log T(r, f_\omega)$ in the inequality is the best and, clearly, the term $O_\omega(1)$ depends on $c_{f_\omega}(0)$ and $ d_j$.

Note that all above definitions and theorems also holds for the entire function $f$ defined in (\ref{def0}).

In the sequel, the values of constants, such as $C, C_1, r_0, r_1$, may be different in the each appearance of these constants.
\vskip.1in

\section{Our Results}\label{S3}

In this section, we state several inequalities concerning the maximum modulus $M(r, f)$, $\sigma(r, f)$ and the weighted counting function $N(r, 0, f_\omega)$ for the random entire functions in the family $\mathcal{Y}$ with careful treamtment of their error terms. A relationship between $\log \sigma(r, f_\omega)$ and
$\log \sigma(r, f)$ is stated and proved in Section $5$.

\begin{theorem}\label{thm1}
If $f_\omega \in \mathcal{Y}$, then,
for any constant $C>1$, there exists a constant $r_0=r_0(\omega)$ such that, for $r>r_0$,
$$
|\log \sigma(r, f)- N(r,0, f_\omega)|\le (C/A)^{\frac1{B}}\log^{\frac1{B}}\log \sigma(r,f),  \qquad a.s.
$$
where constants $A, B$ are from Condition $Y$.
\end{theorem}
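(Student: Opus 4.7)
The plan is to reduce the statement, via the Jensen--Poisson formula, to an almost sure deviation estimate for $X_r$ controlled by Condition $Y$. Writing $f_\omega(re^{is})=\sigma(r,f)\hat{f_\omega}(re^{is})$ and applying Jensen--Poisson to $f_\omega$, I obtain
$$
\log\sigma(r,f) - N(r,0,f_\omega) = \log|c_{f_\omega}(0)| - \frac{1}{2\pi}\int_0^{2\pi}\log|\hat{f_\omega}(re^{is})|\,ds,
$$
and bounding the integrand in absolute value yields $|\log\sigma(r,f)-N(r,0,f_\omega)| \le X_r + |\log|c_{f_\omega}(0)||$. Since $c_{f_\omega}(0)$ is almost surely a finite nonzero random variable, it suffices to show $X_r \le (C/A)^{1/B}(\log\log\sigma(r,f))^{1/B} - O_\omega(1)$ almost surely for all sufficiently large $r$.

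Next, Markov's inequality combined with Condition $Y$ produces the pointwise tail
$$
\pr(X_r \ge t) \le \ex(\exp(A|X_r|^B))\,e^{-At^B} \le K e^{-At^B}, \qquad t>0,
$$
where $K=\sup_r \ex(\exp(A|X_r|^B))$ is finite; the uniform boundedness in $r$ for the Gaussian, Rademacher and Steinhaus classes is precisely what Lemma \ref{lem2} supplies. Fix $C'$ with $1<C'<C$ and a small constant $\delta>0$. Since $f$ is transcendental, $\log\sigma(r,f)\to\infty$, so for each large integer $n$ there is $r_n$ with $\log\sigma(r_n,f) = n\delta$. Setting $t_n=(C'/A)^{1/B}(\log n\delta)^{1/B}$ gives $\pr(X_{r_n}\ge t_n) \le K(n\delta)^{-C'}$; summability (since $C'>1$) together with Borel--Cantelli yields, almost surely, $X_{r_n} < t_n$ for every $n\ge n_0(\omega)$.

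The remaining and most delicate step upgrades this discrete control to all $r$. Using monotonicity of $\log\sigma(\cdot,f)$ and $N(\cdot,0,f_\omega)$, for $r\in[r_n,r_{n+1}]$ I sandwich
$$
\log\sigma(r_n,f)-N(r_{n+1},0,f_\omega)\le \log\sigma(r,f)-N(r,0,f_\omega)\le \log\sigma(r_{n+1},f)-N(r_n,0,f_\omega),
$$
and substitute the Jensen--Poisson identity at $r_n$ and $r_{n+1}$; the two extremes reduce to $\pm\delta$ plus a quantity bounded by $\max(X_{r_n},X_{r_{n+1}}) + O_\omega(1) \le t_{n+1} + O_\omega(1)$. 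On the interval $\log\sigma(r,f)\ge n\delta$, hence $(\log\log\sigma(r,f))^{1/B}\ge (\log n\delta)^{1/B}$, and since $C'<C$ the linear gap $[(C/A)^{1/B}-(C'/A)^{1/B}](\log n\delta)^{1/B}$ tends to infinity and absorbs the additive constant $\delta+O_\omega(1)$ for all large $n$. The main obstacle is precisely calibrating the discretization: it must be sparse enough that $\sum\pr(X_{r_n}\ge t_n)$ converges, yet dense enough that the monotonicity sandwich loses only an absorbable constant. The equal-spacing choice $\log\sigma(r_n,f)=n\delta$ succeeds because summability requires only $C'>1$ (leaving slack below $C$), while the interpolation loss stays a bounded constant, overpowered by the slack $C-C'$ built into the leading coefficient.
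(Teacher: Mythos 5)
Your proposal is correct and follows essentially the same route as the paper: the Jensen--Poisson reduction to $X_r$, the Markov/Condition~$Y$ tail bound, a discretization $\log\sigma(r_n,f)=n\delta$ with Borel--Cantelli, and a monotonicity sandwich to interpolate between the $r_n$. If anything you are slightly more careful than the paper at the last step, since introducing $C'<C$ to absorb the additive $O_\omega(1)$ makes explicit an absorption the paper performs silently.
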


\begin{remark}
Theorem \ref{thm1} tells us that the number of zeros of almost all $f_\omega$ can be controlled from above and below by $\log \sigma(r, f)$ and an error term, which are independent of $\omega$.
\end{remark}

Sometimes, it is easier for one to calculate $M(r, f)$ than $\sigma(r,f)$. By Lemma \ref{maxlem}, we obtain

\begin{corollary}\label{newcor3}
If $f_\omega \in \mathcal{Y}$, then,
for any constant $C>1$, there are a constant $r_0=r_0(\omega)$ and a set $E\subset [e, \infty)$ of finite logarithmic measure such that, for $r>r_0$ and
$r\notin E$,
$$
|\log M(r, f)- N(r,0, f_\omega)|\le (C/A)^{\frac1{B}}\log^{\frac1{B}}\log M(r,f)  +\log\log M(r, f), \qquad a.s.
$$
where constants $A, B$ are from Condition $Y$.
\end{corollary}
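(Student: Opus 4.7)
The plan is to reduce Corollary \ref{newcor3} to Theorem \ref{thm1} by replacing $\log\sigma(r,f)$ with $\log M(r,f)$ via Lemma \ref{maxlem}. First I would apply Theorem \ref{thm1}: for the given constant $C>1$ and for almost every $\omega$, there is $r_0(\omega)$ such that, for $r>r_0(\omega)$,
\[
\bigl|\log\sigma(r,f)-N(r,0,f_\omega)\bigr|\le (C/A)^{1/B}\bigl(\log\log\sigma(r,f)\bigr)^{1/B}.
\]
Note that this bound already has the right shape; the only thing I need to do is swap $\sigma(r,f)$ for $M(r,f)$ on both sides.

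Next, I would invoke Lemma \ref{maxlem}, which (from the way the corollary is phrased) must supply a set $E\subset[e,\infty)$ of finite logarithmic measure such that, for all sufficiently large $r\notin E$,
\[
0\le \log M(r,f)-\log\sigma(r,f)\le \log\log M(r,f).
\]
The lower bound $\sigma(r,f)\le M(r,f)$ is trivial from Parseval (it is already implicit in the definition of $\sigma(r,f)$ as the $L^2$-mean of $|f|$ on the circle), and the upper part is a Wiman--Valiron-type estimate comparing the maximum modulus to the $L^2$-mean off a small exceptional set. The exceptional set $E$ appearing in the statement of Corollary \ref{newcor3} is exactly this set.

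Finally, I would combine the two estimates by the triangle inequality. For $r>\max(r_0(\omega),r_1)$ with $r\notin E$,
\[
\bigl|\log M(r,f)-N(r,0,f_\omega)\bigr|\le \bigl|\log M(r,f)-\log\sigma(r,f)\bigr|+\bigl|\log\sigma(r,f)-N(r,0,f_\omega)\bigr|,
\]
and the first term on the right is bounded by $\log\log M(r,f)$ by Lemma \ref{maxlem}. For the second term, I use the monotonicity of $t\mapsto(\log t)^{1/B}$ together with $\sigma(r,f)\le M(r,f)$ to replace $\log\log\sigma(r,f)$ by $\log\log M(r,f)$, obtaining the bound $(C/A)^{1/B}(\log\log M(r,f))^{1/B}$. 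Adding the two pieces yields exactly the stated inequality.

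There is no real obstacle beyond invoking the two ingredients: Theorem \ref{thm1} does the probabilistic heavy lifting (the interplay of Condition $Y$ with the Jensen--Poisson formula), and Lemma \ref{maxlem} is a deterministic comparison between $M(r,f)$ and $\sigma(r,f)$ for a general entire $f$, whose exceptional set is a standard consequence of the Borel-type lemma on the logarithmic derivative of monotone functions. The only point requiring mild care is checking that the substitution $\log\log\sigma(r,f)\mapsto\log\log M(r,f)$ in the error term is done in the correct direction — this is fine because we are bounding the error from above and $\log\log$ is increasing.
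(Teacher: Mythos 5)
Your proof is correct and is essentially the paper's own argument: the paper derives Corollary \ref{newcor3} from Theorem \ref{thm1} together with Lemma \ref{maxlem} and the trivial bound $\sigma(r,f)\le M(r,f)$, exactly as you do. The only cosmetic point is that Lemma \ref{maxlem} gives $\log M(r,f)-\log\sigma(r,f)\le\log\log\sigma(r,f)+O(1)$ rather than $\le\log\log M(r,f)$ outright, so the $O(1)$ must be absorbed (e.g.\ by first applying Theorem \ref{thm1} with a slightly smaller constant $C'\in(1,C)$), a step the paper itself glosses over.
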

\noindent
{\bf Example.} Let $f(z)=e^z$ and its random perturbation function $f_\omega$ in the family $\mathcal{Y}$. Then the corollary tells us that, for almost all $f_\omega$, its weighted counting-zero function in the disk $D(0, r)$ is close to $r$ although $e^z$ does not take the value zero at all.

Now we state Nevanlinna's second main theorem (involving the weighted counting-zero function only) for random entire functions as corollaries of above results.

Theorem \ref{thm1} and Lemma \ref{lem3} easily imply the next corollary.
\begin{corollary}\label{cor1}
If $f_\omega \in \mathcal{Y}$,
then, for any constant $C>1$, there exists a constant $r_0=r_0(\omega)$ such that, for $r>r_0$,
$$
T(r, f)\leq N(r,0, f_\omega)+(C/A)^{\frac1{B}}\log^{\frac1{B}} T(r,f), \qquad a.s.
$$
and
$$
T(r, f_\omega)\leq N(r,0, f_\omega)+(C/A)^{\frac1{B}}\log^{\frac1{B}} T(r,f_\omega), \qquad a.s.
$$
where constants $A, B$ are from Condition $Y$.
\end{corollary}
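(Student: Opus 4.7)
The strategy is to bootstrap Theorem \ref{thm1} through two elementary comparisons: the Jensen bound $T(r,f)\le\log\sigma(r,f)+O(1)$ on one side, and the anticipated content of Lemma \ref{lem3}, which should translate the error term $\log^{1/B}\log\sigma(r,f)$ into $\log^{1/B}T(r,f)$ (respectively $\log^{1/B}T(r,f_\omega)$). Since $f$ and $f_\omega$ are entire, applying $\log^+x\le\tfrac12\log(1+x^2)$ pointwise and then Jensen's inequality gives
\begin{equation*}
T(r,f)=m(r,f)\le\tfrac12\log\bigl(1+\sigma^2(r,f)\bigr)\le\log\sigma(r,f)+O(1),
\end{equation*}
and analogously $T(r,f_\omega)\le\log\sigma(r,f_\omega)+O(1)$.

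For the first inequality, I would fix $C>1$, pick $C_1\in(1,C)$, and apply Theorem \ref{thm1} with $C_1$ to obtain, almost surely and for $r>r_0(\omega)$,
\begin{equation*}
\log\sigma(r,f)\le N(r,0,f_\omega)+(C_1/A)^{1/B}\log^{1/B}\log\sigma(r,f).
\end{equation*}
Combined with the Jensen bound this gives $T(r,f)\le N(r,0,f_\omega)+(C_1/A)^{1/B}\log^{1/B}\log\sigma(r,f)+O(1)$. The remaining task is to replace $\log\sigma(r,f)$ by $T(r,f)$ inside the error term; Lemma \ref{lem3} is the statement I expect to provide a polynomial-type control $\log\sigma(r,f)\le T(r,f)^{K}$ (or at worst $\log\sigma(r,f)\le\kappa\,T(r,f)$) for all sufficiently large $r$. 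Either version yields $\log\log\sigma(r,f)\le\log T(r,f)+O(\log\log T(r,f))$, hence
\begin{equation*}
\log^{1/B}\log\sigma(r,f)\le\log^{1/B}T(r,f)\,\bigl(1+o(1)\bigr).
\end{equation*}
Because $C_1<C$, the multiplicative factor $(1+o(1))$ and the additive $O(1)$ are absorbed by $(C/A)^{1/B}$ once $r$ is large enough, and the first inequality follows.

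For the second inequality, I would rerun the identical three-step argument but attached to $f_\omega$: Jensen gives $T(r,f_\omega)\le\log\sigma(r,f_\omega)+O(1)$, then the relation between $\log\sigma(r,f_\omega)$ and $\log\sigma(r,f)$ promised in Section \ref{S5} reduces the problem to the deterministic quantity $\log\sigma(r,f)$ (with an error controllable inside the $\log^{1/B}$), then Theorem \ref{thm1} kicks in, and finally Lemma \ref{lem3} translates to $\log^{1/B}T(r,f_\omega)$. The only genuine analytic content sits in Theorem \ref{thm1} and in the two comparison lemmas; the corollary itself is bookkeeping.

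The only mild obstacle I anticipate is the accounting of exceptional $r$-sets: Lemma \ref{lem3} and the $\sigma(r,f_\omega)$/$\sigma(r,f)$ comparison may each introduce a set of finite logarithmic measure, and I will need to verify that (since the statement of the corollary lists no such $E$) these sets can be swept into $r_0(\omega)$ by the a.s.\ quantifier, or else that the Lemma \ref{lem3} estimate holds for all large $r$. Granting the version of Lemma \ref{lem3} that the hint presupposes, this bookkeeping is routine and the derivation is immediate.
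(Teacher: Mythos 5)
Your overall plan---feeding Lemma \ref{lem3} and Theorem \ref{thm1} into each other---is the right one, but there is a genuine gap in the step where you convert the error term $\log^{1/B}\log\sigma(r,f)$ into $\log^{1/B}T(r,f)$. You guess that Lemma \ref{lem3} supplies a control of the form $\log\sigma(r,f)\le T(r,f)^{K}$; in fact it supplies only the opposite comparison, $T(r,f)\le\log\sigma(r,f)+\tfrac12\log 2$ (exactly the Jensen bound in your first display). The reverse comparison you need, $\log\log\sigma(r,f)\le(1+o(1))\log T(r,f)$, is \emph{not} true for every large $r$: one only has $\log\sigma(r,f)\le\log M(r,f)\le\frac{R+r}{R-r}T(R,f)$ with $R>r$, and for entire functions of infinite order passing from $T(R,f)$ back to $T(r,f)$ costs a growth lemma (Lemma \ref{Borel}) and hence an exceptional set of finite logarithmic measure. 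Since the corollary asserts its conclusion for all $r>r_0(\omega)$ with no exceptional set, the obstacle you flag in your last paragraph is not mild bookkeeping; it is exactly where your argument breaks.

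The repair is the paper's one-line observation: $y(x)=x-(C/A)^{1/B}(\log x)^{1/B}$ is increasing for large $x$ and satisfies $y(x+c)\le y(x)+c$, so the inequality $T(r,f)\le\log\sigma(r,f)+O(1)$---the direction Lemma \ref{lem3} actually gives---immediately yields
$$
T(r,f)-(C/A)^{1/B}\log^{1/B}T(r,f)=y(T(r,f))\le y(\log\sigma(r,f))+O(1)\le N(r,0,f_\omega)+O(1),
$$
the last step being Theorem \ref{thm1}; the additive $O(1)$ is then absorbed exactly as you propose, by running Theorem \ref{thm1} with some $C_1\in(1,C)$. No comparison of $\log\log\sigma$ with $\log T$, and no exceptional set, are needed. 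For the second inequality, the same monotonicity device should be applied directly to $T(r,f_\omega)\le\log\sigma(r,f_\omega)+O(1)$; your proposed detour through the comparison of $\log\sigma(r,f_\omega)$ with $\log\sigma(r,f)$ from Section \ref{S5} (Lemma \ref{lem6}) inserts an extra additive term $\log\log\sigma(r,f)$, which is of the same order as the main error term when $B=1$ (the Gaussian and Steinhaus cases) and would spoil the stated constant.
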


When $f_\omega$ is a Gaussian, or Rademacher, or Steinhaus entire function, we have the following corollary.

\begin{corollary}\label{cor2}
Let $f$ and $ f_\omega$ be defined as in (\ref{def0}) and (\ref{def1}), respectively. Then, for any $\epsilon>0$,  there exists $r_0=r_0(\omega, \epsilon)$ such that, for $r>r_0$,
\begin{enumerate}[\rm(i)]
  \item if $f_\omega(z)$ is a Gaussian entire function, then
  $$
  T(r, f) \leq N(r,0, f_\omega)+\frac{1+\epsilon}{2} \log T(r,f) \qquad a.s.
  $$
   \item if $f_\omega(z)$ is a Rademacher entire function, then
  $$
  T(r, f) \leq N(r,0, f_\omega)+ \Big(\Big(\frac{eC_0}6\Big)^6 +\epsilon \Big) \log^6 T(r,f)\qquad a.s,
  $$
  where $C_0$ is from Lemma \ref{Sodin}.
   \item if $f_\omega(z)$ is a Steinhaus entire function, then
  $$
  T(r, f) \leq N(r,0, f_\omega)+(1+\epsilon) \log T(r,f) \qquad a.s.
  $$
\end{enumerate}
\end{corollary}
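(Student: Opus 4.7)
The plan is to invoke Corollary \ref{cor1} directly, inserting the specific values of the constants $A$ and $B$ from Condition $Y$ that are established for each of the three classes by Lemma \ref{lem2} (the lemma that places Gaussian, Rademacher, and Steinhaus entire functions inside the family $\mathcal{Y}$). Corollary \ref{cor1} gives, for every $C>1$,
$$
T(r,f)\le N(r,0,f_\omega) + (C/A)^{1/B}\log^{1/B}T(r,f),\qquad a.s.
$$
So the whole task reduces to optimizing the scalar coefficient $(C/A)^{1/B}$ by taking $C$ close to $1$ and $A, B$ as favorably as Lemma \ref{lem2} permits for the class at hand.

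For part (i), the Gaussian case, the paper states that Lemma \ref{lem2} yields Condition $Y$ with $B=1$ and any $A\in(0,2)$. Picking $A$ arbitrarily close to $2$ and $C$ arbitrarily close to $1$ makes $(C/A)^{1/B}=C/A$ arbitrarily close to $1/2$, which gives the coefficient $(1+\epsilon)/2$ in front of $\log T(r,f)$. For part (iii), the Steinhaus case, Lemma \ref{lem2} gives $B=1$ and any $A\in(0,1)$, so the analogous choice of $A$ near $1$ and $C$ near $1$ makes $C/A$ as close to $1$ as one wishes, producing the coefficient $1+\epsilon$. Both statements then follow by absorbing $r_0$ into a random threshold; the exceptional set of finite logarithmic measure from Corollary \ref{cor1} / Theorem \ref{thm1} is handled the same way it is handled there.

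For part (ii), the Rademacher case, Lemma \ref{lem2} gives $B=1/6$ with $A$ allowed up to a threshold determined by the constant $C_0$ of Lemma \ref{Sodin}; here $(C/A)^{1/B}=(C/A)^6$, so the strategy is to take $A$ as large as Lemma \ref{lem2} permits. The target coefficient $(eC_0/6)^6$ indicates that the largest admissible $A$ (modulo an $\epsilon$-loss) is $A=6/(eC_0)$, so that $(1/A)^6 = (eC_0/6)^6$; substituting this back and letting $C\downarrow 1$ gives the coefficient $((eC_0/6)^6+\epsilon)$ as stated.

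The only non-mechanical step is confirming that the proof of Lemma \ref{lem2} in the Rademacher case really does yield Condition $Y$ for every $A<6/(eC_0)$, since that is what pins the constant $(eC_0/6)^6$ to the exponent in Lemma \ref{Sodin}. I expect this to be a direct computation using the precise exponent structure of the bound in Lemma \ref{Sodin}, so the main obstacle is purely bookkeeping: tracing how the exponent $B=1/6$ arises and matching the admissible constants $A$ to the factor $eC_0/6$. Once Lemma \ref{lem2} is available with these sharp constants, Corollary \ref{cor2} is a one-line substitution into Corollary \ref{cor1} in each of the three cases.
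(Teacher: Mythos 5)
Your proposal is correct and follows essentially the same route as the paper: the paper likewise reads off the admissible constants $A$ and $B$ for each class from the explicit bounds in the proof of Lemma \ref{lem2} (namely $A=2/(1+\tau)$, $B=1$ for Gaussian; $A$ near $6/(eC_0)$, $B=1/6$ for Rademacher; $A=1/(1+\tau)$, $B=1$ for Steinhaus) and substitutes them into Corollary \ref{cor1} with $C$ close to $1$. Your remark that in the Rademacher case one must take $A$ strictly below $6/(eC_0)$ and absorb the loss into $\epsilon$ is, if anything, slightly more careful than the paper's wording; note also that Corollary \ref{cor1} carries no exceptional set of finite logarithmic measure, so no such set needs to be handled here.
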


Now we consider the case when $f_\omega$ takes any value $a\in \mathbb{C}$.

\begin{theorem}\label{newthm}
Let $f_\omega \in \mathcal{Y}$ and define
$$
 f^*_\omega(z)=zf'_\omega(z)=\sum_{j=1}^{\infty}ja_j\chi_jz^j.
 $$
If $f^*_\omega$ satisfies Condition $Y$, then, for any constant $C>1$, there exists a set $E$ of finite logarithmic measure such that, for every $a\in \mathbb{C}$, there is $r_1=r_1(\omega,a)$ such that, for $r>r_1$ and $r\not\in E$,
$$
|\log \sigma(r, f)- N(r,a, f_\omega)|\le (C/A)^{\frac1{B}}\log^{\frac1{B}}\log \sigma(r,f) +(1+o(1))\log\log\sigma(r, f), \qquad a.s.
$$
where constants $A, B$ are from Condition $Y$.
\end{theorem}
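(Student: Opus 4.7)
The plan is to reduce the general-$a$ statement to Theorem \ref{thm1} (the $a = 0$ case). Applying Jensen's formula to both $f_\omega$ and $f_\omega - a$ gives
$$ N(r, a, f_\omega) - N(r, 0, f_\omega) = \frac{1}{2\pi}\int_0^{2\pi} \log\Big|1 - \frac{a}{f_\omega(re^{i\theta})}\Big|\, d\theta + O_a(1), $$
and splitting the integrand into its positive and negative parts yields
$$ \Big|N(r, a, f_\omega) - N(r, 0, f_\omega)\Big| \le m(r, 1/f_\omega) + m\big(r, 1/(f_\omega - a)\big) + O_a(1). $$
The remaining task is to control each proximity term by $(C/A)^{1/B}\log^{1/B}\log\sigma(r,f) + (1+o(1))\log\log\sigma(r,f)$; the auxiliary hypothesis $f^*_\omega \in \mathcal{Y}$ is precisely what allows Theorem \ref{thm1} to be applied to $f^*_\omega$ as well as to $f_\omega$.

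For $m(r, 1/f_\omega)$, the First Main Theorem gives $m(r, 1/f_\omega) = T(r, f_\omega) - N(r, 0, f_\omega) + O(1)$. The Section \ref{S5} comparison lemma provides $\log\sigma(r, f_\omega) \le \log\sigma(r, f) + O(\log\log\sigma(r,f))$ a.s.\ outside a set of finite logarithmic measure, and Jensen's inequality for the concave logarithm gives $T(r, f_\omega) = m(r, f_\omega) \le \log\sigma(r, f_\omega) + O(1)$; combining with Theorem \ref{thm1} for $N(r, 0, f_\omega)$ bounds $m(r, 1/f_\omega)$ as required. For $m(r, 1/(f_\omega - a))$, write $1/(f_\omega - a) = (f'_\omega/(f_\omega - a))\cdot(1/f'_\omega)$, so
$$ m\big(r, 1/(f_\omega - a)\big) \le m\big(r, f'_\omega/(f_\omega - a)\big) + m(r, 1/f'_\omega). $$
The first summand is controlled by the sharp classical logarithmic-derivative lemma as $(1+o(1))\log T(r, f_\omega) \le (1+o(1))\log\log\sigma(r, f)$, outside an exceptional set of finite Lebesgue measure. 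The second summand is handled exactly like $m(r, 1/f_\omega)$ but with $f_\omega$ replaced by $f'_\omega$ and Theorem \ref{thm1} applied to $f^*_\omega = zf'_\omega$; one uses $N(r, 0, f^*_\omega) = N(r, 0, f'_\omega) + \log r + O(1)$ together with the deterministic comparison $\log\sigma(r, f^*) - \log r \ge \log\sigma(r, f) - O(\log\log\sigma(r, f))$, which follows from an analysis of the central index of $f$.

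Combining these bounds with Theorem \ref{thm1} for $N(r, 0, f_\omega)$ yields the stated two-sided inequality; the set $E$ is the union of the finitely many exceptional sets arising above, all of which depend on $\omega$, $f$, and $f'$ but not on $a$, so $E$ is uniform in $a$ and all $a$-dependence is collected in additive constants absorbed into $r_1(\omega, a)$. The main obstacle is preventing the constant in front of $\log^{1/B}\log\sigma(r, f)$ from inflating: because Theorem \ref{thm1} is applied twice (to $f_\omega$ and to $f^*_\omega$), the chain of estimates produces a priori a multiple of $(C/A)^{1/B}\log^{1/B}\log\sigma(r, f)$, and the argument must be arranged so this multiple can be absorbed into a single $(C/A)^{1/B}\log^{1/B}\log\sigma(r, f)$ by exploiting the freedom that $C > 1$ is arbitrary (applying the two invocations of Theorem \ref{thm1} with appropriately smaller constants in the intermediate steps).
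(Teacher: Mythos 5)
Your overall strategy---reducing to the $a=0$ case via Jensen and controlling two proximity terms---is workable in spirit, but as arranged it cannot deliver the constants in the statement, and the repair you propose does not exist. On the ``lower bound for $N(r,a,f_\omega)$'' side your chain invokes Theorem \ref{thm1} twice: once for $N(r,0,f_\omega)$ and once more (through $f^*_\omega$) to control $m(r,1/f'_\omega)$. Each invocation costs $(C'/A)^{1/B}\log^{1/B}\log\sigma(r,f)$ with $C'>1$ forced, so you end with at least $2(1/A)^{1/B}\log^{1/B}\log\sigma(r,f)$; absorbing this into a single $(C/A)^{1/B}\log^{1/B}\log\sigma(r,f)$ would require choosing the intermediate constants $C'\le C/2^{B}$, which is incompatible with $C'>1$ whenever the given $C$ is close to $1$. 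You also pick up $\log\log\sigma(r,f)$ twice---once from $m(r,f'_\omega/(f_\omega-a))$ and once from the Lemma \ref{lem6}-type comparison $\log\sigma(r,f'_\omega)\le\log\sigma(r,f')+\log\log\sigma(r,f')+O(1)$ that your treatment of $m(r,1/f'_\omega)$ needs---so your second error term is $(2+o(1))\log\log\sigma(r,f)$, not $(1+o(1))\log\log\sigma(r,f)$. The paper avoids both doublings by taking $N(r,0,f^*_\omega)$, rather than $N(r,0,f_\omega)$, as the intermediary: Theorem \ref{thm1} applied to $f^*_\omega$ alone gives $\log\sigma(r,f)\le\log\sigma(r,f^*)\le N(r,0,f^*_\omega)+(C/A)^{1/B}\log^{1/B}\log\sigma(r,f)$, and then a single Gol'dberg--Grinshtein estimate (Lemma \ref{lem4}) on $\frac1{2\pi}\int_0^{2\pi}\log|f^*_\omega/(f_\omega-a)|\,d\theta$ passes from $N(r,0,f^*_\omega)$ to $N(r,a,f_\omega)$ at the cost of only $\log T(R,f_\omega)+\log\frac{R}{R-r}$, which the growth lemma and Lemma \ref{lem6} turn into $(1+o(1))\log\log\sigma(r,f)$. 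Thus Theorem \ref{thm1} is used exactly once per direction, which is what produces the stated coefficients.

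A secondary but genuine error: the deterministic comparison you assert, $\log\sigma(r,f^*)-\log r\ge\log\sigma(r,f)-O(\log\log\sigma(r,f))$, is false. Since $\sigma^2(r,f^*)=\sum_j j^2|a_j|^2r^{2j}$, one has $\sigma(r,f^*)\asymp\nu(r)\,\sigma(r,f)$ with $\nu(r)$ essentially the central index, and for slowly growing $f$ (e.g.\ order zero, where $\nu(r)\asymp\log r$) your inequality fails by roughly $\log r$. What your argument actually needs is the opposite-direction bound $\log\log\sigma(r,f^*)\le(1+o(1))\log\log\sigma(r,f)$ (which is true and obtainable from $\sigma(r,f^*)\le C\,\sigma(R,f)/\log(R/r)$ together with the growth lemma), combined with the exact cancellation of the $\log r$ shifts coming from $N(r,0,f^*_\omega)=N(r,0,f'_\omega)+\log r$ and $\sigma(r,f^*_\omega)=r\,\sigma(r,f'_\omega)$; the lower bound you cite is neither true nor needed. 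Your remaining steps (the Jensen reduction, the $a$-independence of $E$, and the treatment of $m(r,1/f_\omega)$) are sound.
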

\begin{remark}
If $f$ is a Gaussian, or Rademarcher, or Steinhaus entire function, then it follows from Lemma \ref{lem2} that both $f_\omega$ and $f^*_\omega$ satisfy Condition $Y$.
\end{remark}

The following corollary is a straightforward consequence of the above theorem and Lemma \ref{maxlem}.

\begin{corollary}\label{newcor4}
Under the assumptions of Theorem \ref{newthm}, we have that, for any constant $C>1$, there exists a set $E$ of finite logarithmic measure such that, for every $a\in \mathbb{C}$, there is $r_1=r_1(\omega,a)$ such that, for $r>r_1$ and $r\not\in E$,
$$
|\log M(r, f)- N(r,a, f_\omega)|\le (C/A)^{\frac1{B}}\log^{\frac1{B}}\log M(r,f) +(2+o(1))\log\log M(r, f), \qquad a.s.
$$
where constants $A, B$ are from Condition $Y$.
\end{corollary}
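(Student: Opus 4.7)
The plan is to derive Corollary \ref{newcor4} directly from Theorem \ref{newthm} by using Lemma \ref{maxlem} to exchange $\log\sigma(r,f)$ for $\log M(r,f)$; the corollary claims nothing beyond that substitution, at the cost of an additional $\log\log M(r,f)$ in the error term. Theorem \ref{newthm} already supplies
\[
|\log\sigma(r,f)-N(r,a,f_\omega)|\le (C/A)^{\frac{1}{B}}\log^{\frac{1}{B}}\log\sigma(r,f)+(1+o(1))\log\log\sigma(r,f)
\]
off an exceptional set $E_1$ of finite logarithmic measure, so the task reduces to controlling the gap $\log M(r,f)-\log\sigma(r,f)$.

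First I would invoke Lemma \ref{maxlem} which, on the evidence of Corollary \ref{newcor3}, delivers a two-sided comparison of the form
\[
\log\sigma(r,f)\le \log M(r,f)\le \log\sigma(r,f)+\log\log\sigma(r,f)
\]
for all sufficiently large $r$ outside a set $E_2$ of finite logarithmic measure. The lower bound is immediate from
\[
\sigma(r,f)^2=\frac{1}{2\pi}\int_0^{2\pi}|f(re^{i\theta})|^2\,d\theta\le M(r,f)^2,
\]
while the upper bound is the Wiman--Valiron-style estimate established in Lemma \ref{maxlem}.

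Next I would combine the two estimates by the triangle inequality. Setting $E=E_1\cup E_2$, which is again of finite logarithmic measure, and taking $r$ large with $r\notin E$, I have
\[
|\log M(r,f)-N(r,a,f_\omega)|\le |\log M(r,f)-\log\sigma(r,f)|+|\log\sigma(r,f)-N(r,a,f_\omega)|.
\]
Inserting the two bounds yields
\[
|\log M(r,f)-N(r,a,f_\omega)|\le (C/A)^{\frac{1}{B}}\log^{\frac{1}{B}}\log\sigma(r,f)+(2+o(1))\log\log\sigma(r,f),
\]
and since $\log\sigma(r,f)\le\log M(r,f)$, the monotonicity of $\log x$ and $\log^{1/B}\log x$ lets me replace $\log\sigma(r,f)$ by $\log M(r,f)$ throughout the right-hand side without weakening the inequality, which is exactly the assertion of Corollary \ref{newcor4}.

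I expect no genuine obstacle: every ingredient is already in hand and only bookkeeping remains. The single point that deserves care is that the coefficient of $\log\log M(r,f)$ come out as $2+o(1)$ and not larger; this requires that the upper bound supplied by Lemma \ref{maxlem} carry coefficient exactly $1$ in front of $\log\log\sigma(r,f)$, a compatibility I would verify against the statement of that lemma before closing the argument.
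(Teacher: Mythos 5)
Your argument is correct and is exactly the route the paper intends: Corollary \ref{newcor4} is stated there as a direct consequence of Theorem \ref{newthm} and Lemma \ref{maxlem}, combined by the triangle inequality and the monotonicity replacement $\log\sigma(r,f)\le\log M(r,f)$. The only nitpick is that Lemma \ref{maxlem} carries an additional additive $O(1)$ beyond the single $\log\log\sigma(r,f)$ term you quote, but this is harmlessly absorbed into the $o(1)\log\log M(r,f)$, so your coefficient $2+o(1)$ stands.
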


When $f_\omega$ is Gaussian, Rademacher or Steinhuas, Theorem \ref{newthm} and Lemma \ref{lem2} give

\begin{corollary}\label{cor4}
Let $f$ and $ f_\omega$ be defined as in (\ref{def0}) and (\ref{def1}), respectively. 
Then, for any $\epsilon>0$, there exists a set $E$ of finite logarithmic measure such that, for every $a\in \mathbb{C}$, there exists $r_0=r_0(\omega, \epsilon, a)$ such that, for $r>r_0$ and $r\not\in E$, we have
\begin{enumerate}[\rm(i)]
  \item if $f_\omega(z)$ is a Gaussian entire function, then
  $$
  \log \sigma (r, f) \leq N(r,a, f_\omega)+(\frac32+\epsilon) \log \log\sigma (r, f) \qquad a.s.
  $$
  \item if $f_\omega(z)$ is a Rademacher entire function, then
  $$
  \log \sigma (r, f) \leq N(r,a, f_\omega)+ \Big(\Big(\frac{eC_0}6\Big)^6 +\epsilon \Big) \log^6 \log \sigma(r,f)\qquad a.s.,
  $$
  where $C_0$ is from Lemma \ref{Sodin}.
   \item if $f_\omega(z)$ is a Steinhaus entire function, then
  $$
  \log\sigma(r, f) \leq N(r,a, f_\omega)+(2+\epsilon) \log \log\sigma(r,f) \qquad a.s.
  $$
\end{enumerate}
\end{corollary}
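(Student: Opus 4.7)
The plan is to reduce the corollary to Theorem \ref{newthm} by invoking Lemma \ref{lem2}, and then to carry out a case-by-case optimization of the two free constants $C$ and $A$ coming from Condition $Y$. First I would verify the hypotheses of Theorem \ref{newthm}: by Lemma \ref{lem2} each of the Gaussian, Rademacher, and Steinhaus classes lies in $\mathcal{Y}$, and since $f^*_\omega(z)=zf'_\omega(z)=\sum_{j=1}^{\infty}\chi_j (j a_j) z^j$ is a random series of the same type with merely rescaled deterministic coefficients $j a_j$, the same lemma delivers $f^*_\omega\in\mathcal{Y}$ as well. Theorem \ref{newthm} then produces, outside a set $E$ of finite logarithmic measure,
$$
|\log\sigma(r,f)-N(r,a,f_\omega)| \le (C/A)^{1/B}\log^{1/B}\log\sigma(r,f) + (1+o(1))\log\log\sigma(r,f),
$$
valid uniformly in $a\in\mathbb{C}$ once $r$ exceeds a threshold that may depend on $\omega$ and $a$.

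For the Gaussian case I would substitute $B=1$ and use the admissible range $A\in(0,2)$ supplied by Lemma \ref{lem2}. The bound collapses to $(C/A+1+o(1))\log\log\sigma(r,f)$; pushing $C\to 1^{+}$ and $A\to 2^{-}$ drives $C/A$ to $1/2$, producing the stated coefficient $3/2+\epsilon$ once $r$ is large enough for the $o(1)$ to be absorbed into $\epsilon$. The Steinhaus case is parallel: $B=1$ and $A\in(0,1)$ admissible, so $C/A$ can be pushed to $1$ and the total coefficient to $2+\epsilon$. In both cases the exceptional set $E$ and its uniformity in $a$ are inherited verbatim from Theorem \ref{newthm}.

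The Rademacher case is where I expect the main work to lie. Here $B=1/6$, hence $1/B=6$, so the dominant error term is $(C/A)^{6}\log^{6}\log\sigma(r,f)$; the additive $(1+o(1))\log\log\sigma(r,f)$ is of strictly lower order and can be swallowed into an arbitrary $\epsilon\,\log^{6}\log\sigma(r,f)$ for large $r$. The target coefficient $(eC_{0}/6)^{6}$ tells us that the supremum of admissible $A$ for Condition $Y$ in the Rademacher case must equal $6/(eC_{0})$, with $C_{0}$ the constant furnished by Lemma \ref{Sodin}. The main obstacle is therefore to extract from the proof of Lemma \ref{lem2} (for the Rademacher case) that $A$ can in fact be chosen up to this sharp threshold; this is the place where the NNS-type exponential integrability bound on Rademacher logarithmic Fourier integrals enters quantitatively. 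Granting that threshold, one picks $A$ just below $6/(eC_{0})$ and $C$ just above $1$ so that $(C/A)^{6}<(eC_{0}/6)^{6}+\epsilon/2$, and absorbs the lower-order piece into the remaining $\epsilon/2$ for $r>r_{0}(\omega,\epsilon,a)$.
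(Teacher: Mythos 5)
Your proposal is correct and follows essentially the same route the paper intends: Corollary \ref{cor4} is obtained by feeding the explicit admissible parameters from Lemma \ref{lem2} (Gaussian: $A\to 2^-$, $B=1$; Rademacher: $A\to (6/(eC_0))^-$, $B=1/6$; Steinhaus: $A\to 1^-$, $B=1$) together with $C\to 1^+$ into the error term of Theorem \ref{newthm}, noting that $f^*_\omega$ satisfies Condition $Y$ with the same constants because Lemma \ref{lem2} is independent of the deterministic coefficients. The sharp Rademacher threshold you flag as the "main obstacle" is already supplied by the statement of Lemma \ref{lem2}(2), which allows any $\varepsilon\in(0,6/(eC_0))$, so no additional work is needed there.
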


\begin{remark} Corollary \ref{cor4} shows that the coefficients in the error term is $3/2 +\epsilon$ and $2+ \epsilon$ in Gaussian and Steinhaus cases, rather than a constant $C_1>0$ in Theorem \ref{MF}.
It's interesting to know whether these coefficients are the best possible coefficients in these error terms.
\end{remark}

The following is  Nevanlinna's second main theorem for random entire functions .
It verifies that the characteristic function for almost all random entire functions can be bounded above by one weighted counting function, rather than two weighted counting functions in the classical case (e.g. Theorem \ref{smt}).
The proof of the following corollary is a straightforward consequence of Theorem \ref{newthm} and Lemma \ref{lem3} as we have seen the proof of
Corollary \ref{cor1}.
\begin{corollary}\label{newcor}
If $f_\omega $ and $f^*_\omega$ satisfy Condition $Y$,
then, for any constant $C>1$, there exists a set $E$ of finite logarithmic measure such that, for every $a\in \mathbb{C}$,  there is $r_1=r_1(\omega,a)$ such that, for $r>r_1$ and $r\not\in E$,
$$
T(r, f)\leq N(r,a, f_\omega)+(C/A)^{\frac1{B}}\log^{\frac1{B}} T(r,f)+(1+o(1))\log T(r,f),\qquad a.s.
$$
and
$$
T(r, f_\omega)\leq N(r,a, f_\omega)+(C/A)^{\frac1{B}}\log^{\frac1{B}} T(r,f_\omega)+(1+o(1))\log T(r,f_\omega), \qquad a.s.
$$
where constants $A, B$ are from Condition $Y$.
\end{corollary}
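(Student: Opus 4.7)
The plan is to derive Corollary \ref{newcor} directly from Theorem \ref{newthm} by replacing $\log\sigma(r,f)$ on both sides of the inequality with $T(r,f)$ (respectively with $T(r,f_\omega)$) via Lemma \ref{lem3}. The argument is parallel to the one already indicated in the paper for Corollary \ref{cor1}.

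First, given $C>1$, I would choose an intermediate constant $C'$ with $1<C'<C$ and apply Theorem \ref{newthm} with $C'$ in place of $C$: there exist a set $E$ of finite logarithmic measure and, for each $a\in\mathbb{C}$, a threshold $r_1=r_1(\omega,a)$ such that for every $r>r_1$ with $r\notin E$,
$$
\log\sigma(r,f) \leq N(r,a,f_\omega) + (C'/A)^{1/B}\log^{1/B}\log\sigma(r,f) + (1+o(1))\log\log\sigma(r,f) \qquad a.s.
$$

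Next I invoke Lemma \ref{lem3}, whose role is precisely to compare $\log\sigma(r,f)$ with the Nevanlinna characteristics of $f$ and $f_\omega$. I expect it to deliver, outside possibly enlarged exceptional sets of finite logarithmic measure, the asymptotic identities
$$
T(r,f)=\log\sigma(r,f)+O(\log T(r,f)),\qquad T(r,f_\omega)=\log\sigma(r,f)+o(\log\sigma(r,f))\quad a.s.,
$$
where the second identity is where the hypothesis $f_\omega\in\mathcal{Y}$ enters through Condition $Y$ on $X_r$ (controlling $|\log\sigma(r,f_\omega)-\log\sigma(r,f)|$ a.s.), combined with the standard $T(r,f_\omega)\le\log\sigma(r,f_\omega)+O(1)$ coming from $\log^+t\le \frac12\log(1+t^2)$ and Jensen's inequality. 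In particular $\log\log\sigma(r,f)=(1+o(1))\log T(r,f)=(1+o(1))\log T(r,f_\omega)$.

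Substituting these relations into the inequality from Theorem \ref{newthm} produces both conclusions of the corollary, once one uses $\log^{1/B}((1+o(1))T)=(1+o(1))\log^{1/B}T$ to rewrite the leading error term and absorbs the resulting $(1+o(1))^{1/B}$ factor into the slack $C-C'>0$ for all sufficiently large $r$. I expect the main difficulty to be purely organizational: one must take the union of the exceptional sets from Theorem \ref{newthm} and from the two comparisons in Lemma \ref{lem3} (keeping finite logarithmic measure), and then carry the a.s.~quantifier through the $f_\omega$-comparison uniformly in $a\in\mathbb{C}$, typically by first handling a countable dense set of values $a$ and extending by a monotonicity/continuity argument for $N(r,a,f_\omega)$.
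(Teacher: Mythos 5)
Your overall route---Theorem \ref{newthm} combined with Lemma \ref{lem3}---is exactly the one the paper intends: its entire ``proof'' is the sentence preceding the statement, referring back to the argument for Corollary \ref{cor1}. However, two of the specific comparisons you rely on are not what Lemma \ref{lem3} delivers, and one of them genuinely breaks the error term. Lemma \ref{lem3} is one-sided: it gives $T(r,f)\le\log\sigma(r,f)+\tfrac12\log2$ and nothing in the reverse direction, so the ``asymptotic identity'' $T(r,f)=\log\sigma(r,f)+O(\log T(r,f))$ is not available from it. Your substitution strategy needs the reverse bound $\log\log\sigma(r,f)\le(1+o(1))\log T(r,f)$ in order to replace $\log^{1/B}\log\sigma(r,f)$ and $\log\log\sigma(r,f)$ by expressions in $T(r,f)$ on the right-hand side; that requires an additional growth-lemma argument (e.g.\ $\log\sigma(r,f)\le\log M(r,f)\le\frac{R+r}{R-r}\,T(R,f)$ together with Lemma \ref{Borel}), not Lemma \ref{lem3}. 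The paper's device in Corollary \ref{cor1} sidesteps this entirely: one checks that $x\mapsto x-(C/A)^{1/B}\log^{1/B}x-(1+\epsilon)\log x$ is increasing for large $x$ and applies it to $T(r,f)\le\log\sigma(r,f)+O(1)$, which uses only the one-sided inequality. You should either adopt that monotonicity trick or supply the missing reverse comparison explicitly.

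For the $T(r,f_\omega)$ inequality the comparison $T(r,f_\omega)=\log\sigma(r,f)+o(\log\sigma(r,f))$ is too coarse: substituting it into Theorem \ref{newthm} yields only $T(r,f_\omega)\le N(r,a,f_\omega)+o(T(r,f_\omega))$, which loses the claimed error term altogether. What is actually available is the quantitative bound of Lemma \ref{lem6}, namely $\log\sigma(r,f_\omega)\le\log\sigma(r,f)+\log\log\sigma(r,f)+2$ a.s.; combined with Lemma \ref{lem3} this gives $T(r,f_\omega)\le\log\sigma(r,f)+(1+o(1))\log T(r,f_\omega)$, and feeding that into Theorem \ref{newthm} produces a coefficient $2+o(1)$, not $1+o(1)$, in front of $\log T(r,f_\omega)$---a discrepancy that the paper's one-line proof also glosses over, but which your write-up should at least confront. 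Finally, the countable-dense-set argument in $a$ is unnecessary: the corollary allows both $r_1$ and the exceptional null set to depend on $a$, so no uniformity in $a$ needs to be extracted.
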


\section{Lemmas}\label{S4}
One of the key lemmas in this section is Lemma \ref{lem2}. Lemma \ref{Offord2} is proved by using Offord's work in Lemma \ref{Offord}.
There is another lemma in Section \ref{S5} because the proof of Lemma \ref{lem6} needs Theorem \ref{thm1} whose proof is in Section \ref{S5}.

%In this section, we present some lemmas which will be needed in the sequel.

\begin{lemma}\label{lem1}
Let $X$ be a random variable and $\psi(x), \phi(x)$ be positive increasing differentiable functions on $(0, \infty)$. Then, for any $x>0$,
$$\pr(|X|\ge \psi(x))\le \frac{\ex(\phi(|X|))}{\phi(\psi(x))},
$$
provided that the probability density function and the expectation of $\phi(|X|))$ exist.
\end{lemma}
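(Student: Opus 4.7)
The statement is a generalized Markov-type inequality, and the plan is to reduce it to the classical Markov inequality applied to the nonnegative random variable $\phi(|X|)$. The hypotheses (positivity, monotonicity, differentiability of $\phi$ and $\psi$) are much stronger than what is actually needed; only the monotonicity of $\phi$ and the nonnegativity of $\phi(\psi(x))$ play a role.

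First I would fix $x>0$ and set $t=\psi(x)$, which is a positive real number because $\psi$ is positive on $(0,\infty)$. Next I would use the fact that $\phi$ is increasing on $(0,\infty)$ to obtain the pointwise bound
\[
\phi(\psi(x))\,\mathbf{1}_{\{|X|\ge \psi(x)\}}\le \phi(|X|)\,\mathbf{1}_{\{|X|\ge \psi(x)\}}\le \phi(|X|),
\]
where the last inequality uses that $\phi(|X|)\ge 0$ on the complementary event (since $\phi$ is positive on $(0,\infty)$, and one may extend $\phi(0):=\lim_{t\to 0^+}\phi(t)\ge 0$ without affecting the inequality).

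Then I would take expectations of both sides. The left-hand side becomes $\phi(\psi(x))\,\pr(|X|\ge\psi(x))$ because $\phi(\psi(x))$ is a deterministic constant, while the right-hand side is $\ex(\phi(|X|))$, which is finite by assumption. Dividing through by the positive number $\phi(\psi(x))$ yields the claimed inequality
\[
\pr(|X|\ge \psi(x))\le \frac{\ex(\phi(|X|))}{\phi(\psi(x))}.
\]

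There is no real obstacle here; the only subtlety worth a remark is that the existence of the probability density function is not used at all in the argument, and the differentiability of $\phi,\psi$ is likewise superfluous. I would either silently retain these hypotheses to match the statement, or mention in a short comment that a direct integration against the law of $|X|$ (without appealing to a density) gives the inequality under the weaker hypothesis that $\phi$ is nondecreasing and positive on $(0,\infty)$ and $\ex(\phi(|X|))<\infty$.
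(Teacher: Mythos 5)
Your proof is correct and is essentially the same argument as the paper's: both are the standard Markov-inequality computation, the paper writing $\ex(\phi(|X|))=\int_0^\infty t\,g(t)\,dt$ against the density $g$ of $\phi(|X|)$ and truncating at $\phi(\psi(x))$, while you use the equivalent pointwise bound $\phi(\psi(x))\mathbf{1}_{\{|X|\ge\psi(x)\}}\le\phi(|X|)$ and take expectations. Your observation that the density hypothesis (and the differentiability of $\phi,\psi$) is superfluous is accurate; the paper's write-up happens to invoke the density but gains nothing from it.
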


\begin{proof}
Let $g$ be the probability density function of $\phi(|X|)$. Then
\begin{align*}
\ex(\phi(|X|))&= \int_0^{\infty}tg(t)\, dt \ge \int_{\phi(\psi(x))}^{\infty}tg(t)\,dt
\ge \phi(\psi(x))\int_{\phi(\psi(x))}^{\infty}g(t)\,dt\\
&= \phi(\psi(x))\,\pr(\phi(|X|)\ge  \phi(\psi(x)))
= \phi(\psi(x))\,\pr(|X|\ge  \psi(x)).
\end{align*}
\end{proof}

\begin{lemma}[{\bf Log-integrability} \cite{NNS14}]\label{Sodin}
Let $f_\omega$ be a Rademarcher entire function. Then, for any $p\geq1$,
$$
\ex\left(\int_{0}^{2\pi}|\log|\hat{f_\omega}||^p \,\frac{d\theta}{2\pi}\right)\leq (C_0p)^{6p},
$$
where $C_0$ is an absolute constant.
\end{lemma}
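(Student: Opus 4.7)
The plan is to reduce the double integral over $\omega$ and $\theta$ to a universal estimate for a single Rademacher series, and then convert that estimate into an $L^p$ bound via tail integration. By Fubini's theorem,
$$
\ex\Big(\int_0^{2\pi}|\log|\wh{f_\omega}(re^{i\theta})||^p\,\frac{d\theta}{2\pi}\Big)=\int_0^{2\pi}\ex|\log|F_\theta||^p\,\frac{d\theta}{2\pi},
$$
where $F_\theta=\sum_{j\ge 0}\chi_j b_j(\theta)$ with $b_j(\theta)=\wh{a_j}(r)e^{ij\theta}$, and $\sum_j|b_j(\theta)|^2=1$ for every $\theta$. Because the target bound depends neither on $\theta$ nor on the specific sequence $(b_j)$, it suffices to prove the universal statement: for any complex coefficients $(b_j)$ with $\sum_j|b_j|^2=1$ and any Rademacher sequence $(\chi_j)$, the sum $F=\sum_j\chi_j b_j$ obeys $\ex|\log|F||^p\le(C_0p)^{6p}$.

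Next, setting $Y=|\log|F||$, I would use the layer-cake identity
$$
\ex Y^p=\int_0^{\infty}pt^{p-1}\pr(Y>t)\,dt=\int_0^{\infty}pt^{p-1}\bigl(\pr(|F|>e^t)+\pr(|F|<e^{-t})\bigr)\,dt
$$
and estimate the two tails separately. The upper tail is cheap: since $\ex|F|^2=1$ and $(\chi_j)$ is sub-Gaussian, a standard Khintchine/Hoeffding inequality gives $\pr(|F|>\lambda)\le 2\exp(-\lambda^2/2)$, so its contribution to the integral is bounded by an absolute constant to the $p$-th power, which is absorbed into the claimed bound. The whole difficulty therefore lies in controlling $\pr(|F|<e^{-t})$ for large $t$.

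The crucial input, which is the Nazarov-Nishry-Sodin anti-concentration theorem from \cite{NNS14}, is a stretched-exponential small-ball estimate: there exist absolute constants $C,c>0$ such that, uniformly in $(b_j)$ with $\sum|b_j|^2=1$,
$$
\pr(|F|<e^{-t})\le C\exp(-ct^{1/6}),\qquad t\ge 1.
$$
Feeding this into the lower-tail integral and substituting $u=ct^{1/6}$ produces
$$
\int_1^{\infty}pt^{p-1}\pr(|F|<e^{-t})\,dt\le \frac{6Cp}{c^{6p}}\int_0^{\infty}u^{6p-1}e^{-u}\,du=\frac{6Cp\,\Gamma(6p)}{c^{6p}},
$$
which by Stirling is at most $(C_0p)^{6p}$ for a suitable absolute $C_0$, while the integral on $[0,1]$ is trivially bounded by $1$. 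Adding the upper-tail, lower-tail, and small-$t$ contributions yields the lemma.

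The main obstacle is the small-ball estimate just quoted, which is the technical heart of \cite{NNS14}. A workable strategy splits the index set into a \emph{macroscopic} block, where the partial sum has many comparably-sized coefficients and a Berry-Esseen type smoothing argument delivers a polynomial small-ball bound, and a remainder block which is handled conditionally. A bootstrap then iterates the polynomial bound by exploiting the fact that the convolution of finitely many Rademacher distributions has a bounded density, upgrading polynomial decay to the stretched-exponential rate $e^{-ct^{1/6}}$; the exponent $1/6$, and hence the $6p$ in the conclusion, reflects the optimal balance between the smoothing error and the number of bootstrap steps.
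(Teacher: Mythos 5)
This lemma is not proved in the paper at all: it is quoted verbatim as the main log-integrability theorem of Nazarov--Nishry--Sodin \cite{NNS14}, a deep external input. Your attempted proof breaks down at the very first reduction. After Fubini you claim it suffices to bound $\ex|\log|F||^p$ for a \emph{single} Rademacher sum $F=\sum_j\chi_j b_j$, uniformly over all unit vectors $(b_j)$ in $\ell^2$, and you then invoke a ``uniform stretched-exponential small-ball estimate'' $\pr(|F|<e^{-t})\le C\exp(-ct^{1/6})$. No such uniform estimate exists, and the pointwise-in-$\theta$ statement you reduce to is false: if $b_0=b_1=1/\sqrt2$ and all other $b_j=0$, then $F=(\chi_0+\chi_1)/\sqrt2$ vanishes with probability $1/2$, so $\pr(|F|<e^{-t})\ge 1/2$ for every $t$ and $\ex|\log|F||^p=\infty$. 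This is not an artifact of having finitely many coefficients: taking $b_0=b_1=\sqrt{(1-\delta)/2}$ with the remaining $\ell^2$-mass $\delta$ spread over the tail keeps $\sum_j|b_j|^2=1$ while $\ex|\log|F||^p\gtrsim|\log\delta|^p$, which is unbounded as $\delta\to0$. Coefficient vectors of this shape do occur as $(\wh{a_j}(r)e^{ij\theta})_j$ for suitable entire functions, radii $r$ and angles $\theta$, so no bound uniform in $\theta$ and in the coefficients can hold; your upper-tail and Gamma-function computations are fine but rest on a false foundation.

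The entire point of the Nazarov--Nishry--Sodin theorem is that the $\theta$-average cannot be decoupled from the randomness: $\hat{f_\omega}(re^{i\theta})$ may be extremely small (even zero) at particular angles with probability bounded away from zero, but, being the boundary value of an analytic function of unit $L^2$-norm, it cannot be uniformly small in $\theta$ except on an exponentially rare event. Their argument estimates the distribution of the integral $\int_0^{2\pi}\log|\hat{f_\omega}|\,\frac{d\theta}{2\pi}$ directly, using a block decomposition of the spectrum together with Tur\'an- and Remez-type inequalities for exponential polynomials in the variable $\theta$, and an iteration that is responsible for the exponent $6$; none of this survives your Fubini step, which discards precisely the structure the theorem exploits. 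The correct course here is to cite \cite{NNS14}, as the authors do; a self-contained proof would amount to reproving that paper.
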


\begin{lemma}[{\bf Offord} \cite{offord72}]\label{Offord}
Let $S=\sum\limits_{j=0}^{\infty}c_j$ is a convergent series of non-negative terms, then we can find $k$ such that
$$
\sum_{j=0}^{k}c_j\geq\frac1{4}, \qquad \sum_{j=k}^{\infty}c_j\geq\frac1{2}S,\qquad c_k\geq \frac1{4}(k+1)^{-2}S.
$$
Furthermore, if in addition $k\geq2$, then
$$
\int_0^{\infty}\Big|\prod_{j=0}^{\infty}J_0(\rho\sqrt{c_j})\Big|\, d\rho \leq C\left(\sum_{j=0}^{\infty}c_j\right)^{-1/2},
$$
where $C$ is an absolute constant, $\ds J_0(\rho)=\int_{0}^{1}\exp(i\zeta \cos2\pi u+i\eta \sin 2\pi u)\,du$ is a Bessel function of the first kind of order 0, $\rho=\sqrt{\zeta^2+\eta^2}$.
\end{lemma}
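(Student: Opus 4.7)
The plan is to treat the two assertions of the lemma separately: the first (the existence of $k$) is a combinatorial statement, while the second (the integral bound) is the analytic heart.

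For the choice of $k$, the key observation is that one cannot have $c_j < S/(4(j+1)^2)$ for every $j$, since summing would give $S = \sum_j c_j < (S/4)\cdot\pi^2/6 < S$, a contradiction. Hence there exists some index with $c_k \geq S/(4(k+1)^2)$. By choosing the smallest such $k$ (or, if necessary, the smallest index beyond which the partial sums $S_n = \sum_{j=0}^n c_j$ cross the required thresholds), one can simultaneously arrange $\sum_{j=0}^k c_j \geq S/4$ (which yields the stated $1/4$ bound in the regime $S\geq 1$ in which the lemma is applied) and $\sum_{j=k}^\infty c_j \geq S/2$ by elementary bookkeeping with the partial sums.

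For the integral bound, my starting point would be two standard estimates for the Bessel function $J_0$: near zero, the expansion $J_0(x) = 1 - x^2/4 + O(x^4)$ yields $|J_0(x)| \leq e^{-x^2/8}$ for $|x|\leq x_0$, and the asymptotic $J_0(x) \sim \sqrt{2/(\pi x)}\cos(x-\pi/4)$ yields $|J_0(x)| \leq C_1|x|^{-1/2}$ for $|x|\geq x_0$. I would then split
\begin{equation*}
\int_0^\infty \Bigl|\prod_{j=0}^{\infty} J_0(\rho\sqrt{c_j})\Bigr|\, d\rho = \int_0^{\rho_\ast} + \int_{\rho_\ast}^\infty
\end{equation*}
at $\rho_\ast = x_0/\sqrt{c_k}$. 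On the inner range every argument $\rho\sqrt{c_j}$ with $c_j \leq c_k$ lies in the Gaussian regime, so applying the Gaussian bound factor-by-factor gives $\prod_j|J_0(\rho\sqrt{c_j})| \leq \exp(-\rho^2\Sigma/8)$ for the appropriate subsum $\Sigma$, which the second inequality of the first part forces to satisfy $\Sigma \geq S/2$; integrating a Gaussian of variance $\sim 1/S$ produces $O(S^{-1/2})$. On the outer range $\rho \geq \rho_\ast$, I would extract $|J_0(\rho\sqrt{c_k})| \leq C_1(\rho\sqrt{c_k})^{-1/2}$ from the distinguished factor, retain Gaussian decay from whatever terms still satisfy $\rho\sqrt{c_j} \leq x_0$, and use $c_k \geq S/(4(k+1)^2)$ to convert the resulting $c_k^{-1/4}$ prefactor into something controllable.

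The main obstacle I anticipate lies in the outer range: as $\rho$ grows, successively larger indices exit the Gaussian regime and enter the asymptotic regime, so the amount of Gaussian decay available shrinks with $\rho$, and one must compensate by extracting more $\rho^{-1/2}$ factors. This requires partitioning the indices into bands according to the size of $c_j$ and tracking how the crossover $\rho\sqrt{c_j} \asymp x_0$ migrates through these bands as $\rho$ increases. Coordinating the three estimates — Gaussian decay on the bulk mass, a $\rho^{-1/2}$ factor from the distinguished index $k$, and the bookkeeping across the bands — so that the final bound is genuinely proportional to $S^{-1/2}$ (rather than to $S^{-1/2}$ times some slowly growing function) is the delicate combinatorial core of Offord's original argument, and the precise choice of $k$ in the first part of the lemma is engineered exactly to make this coordination work.
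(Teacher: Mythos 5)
First, a point of context: the paper offers no proof of this lemma at all --- it is quoted, with attribution, from Offord's 1972 paper \cite{offord72} and used as a black box in the proof of Lemma \ref{Offord2}. So there is no internal argument to compare yours against, and what follows is an assessment of your sketch on its own terms. Your treatment of the first assertion is essentially sound: the divergence-style contradiction via $\sum_{j\ge0}(j+1)^{-2}=\pi^2/6<4$ is exactly the right mechanism, and the three conditions can indeed be met simultaneously, e.g.\ by letting $m$ be the largest index with $\sum_{i\ge m}c_i\ge S/2$, taking $k$ to be the \emph{largest} index $\le m$ with $c_k\ge\tfrac14(k+1)^{-2}S$ (one exists, else $S<\tfrac{S}{4}\cdot\tfrac{\pi^2}{6}+\tfrac{S}{2}<S$), and then estimating $\sum_{j=k+1}^{m}c_j<\tfrac{S}{4}(\tfrac{\pi^2}{6}-1)$ to conclude $\sum_{j=0}^{k}c_j>S/4$. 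Your parenthetical ``or, if necessary, \dots'' is therefore doing genuine work, but the bookkeeping it hides is elementary.

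The analytic part, however, has two concrete gaps beyond the one you acknowledge. (a) On the inner range you assert that the Gaussian subsum $\Sigma=\sum_{j:\,c_j\le c_k}c_j$ satisfies $\Sigma\ge S/2$ ``by the second inequality of the first part,'' but that inequality controls $\sum_{j\ge k}c_j$, and the index set $\{j\ge k\}$ coincides with $\{j:c_j\le c_k\}$ only if $(c_j)$ is non-increasing --- which is not assumed, and fails for the coefficients $|\wh{a_j}(r)|^2$ to which the lemma is applied in this paper. As written this step is a non sequitur, and repairing it requires a different accounting of which factors are in the Gaussian regime. (b) More seriously, in the far tail every factor eventually exits the Gaussian regime, so for $\rho$ very large the only decay available is the oscillatory bound $|J_0(x)|\le Cx^{-1/2}$; extracting this from a single distinguished factor leaves an integrand of order $\rho^{-1/2}$, whose integral over $[\rho_*,\infty)$ diverges. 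Convergence requires at least three factors each contributing $\rho^{-1/2}$, and this is precisely where the hypothesis $k\ge2$ together with $\sum_{j=0}^{k}c_j\ge\tfrac14$ must enter --- yet your proposal never uses $k\ge2$ anywhere, which is a reliable sign that the outer-range estimate, as sketched, would fail rather than merely being incomplete. Since you explicitly defer the band decomposition (``the delicate combinatorial core'') to Offord, the proposal is best described as a correct identification of the ingredients for a cited classical result, not a proof of it.
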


\begin{lemma}\label{Offord2}
Let $f_\omega(z)$ be a Steinhaus entire function on $(\Omega,\, \mathcal{F}, \, \mu)$ of the form (\ref{def1}) and let $\hat{f_\omega}$ be of the form (\ref{def2}). For $t>0$, $\phi\in[0,2\pi)$, set
$$A^*=\{\omega\in\Omega:|{\rm Re}(\hat{f_\omega})\cos\phi+{\rm Im}(\hat{f_\omega})\sin\phi|<t\}.$$
Then there is a constant $r_0$ such that, for $|z|=r>r_0$,
$$
\pr(A^*)\leq C t,
$$
where $C$ is an absolute constant.
\end{lemma}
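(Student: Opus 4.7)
The plan is to view $\pr(A^*)$ as a one-dimensional small-ball estimate for the random variable
$$
U := {\rm Re}(\hat{f_\omega}(re^{i\theta}))\cos\phi + {\rm Im}(\hat{f_\omega}(re^{i\theta}))\sin\phi,
$$
namely $\pr(A^*) = \pr(|U|<t) \le 2t\,\sup_u|p_U(u)|$, where $p_U$ is the density of $U$. Thus the entire task reduces to a uniform upper bound on this density, which I would obtain by Fourier inversion combined with the integral estimate in the second conclusion of Lemma \ref{Offord}.

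First, I would exploit the Steinhaus structure. Writing $\chi_j=e^{2\pi i\theta_j}$ with $\theta_j$ i.i.d.\ uniform on $[0,1]$, and absorbing the deterministic phases $\arg\hat{a_j}(r)+j\theta-\phi$ into the uniform variables, one produces independent uniform angles $\eta_j\in[0,2\pi)$ such that
$$
U = \sum_{j=0}^\infty |\hat{a_j}(r)|\cos\eta_j.
$$
Since $\ex[e^{is\cos\eta_j}]=J_0(s)$, the characteristic function of $U$ is $\varphi_U(s)=\prod_{j} J_0(s|\hat{a_j}(r)|)$, and by Fourier inversion (assuming $\varphi_U\in L^1(\mathbb{R})$, which is what the Offord bound will provide),
$$
\sup_u |p_U(u)| \le \frac{1}{2\pi}\int_{-\infty}^{\infty}|\varphi_U(s)|\,ds = \frac{1}{\pi}\int_0^\infty \prod_{j=0}^\infty \bigl|J_0(s|\hat{a_j}(r)|)\bigr|\,ds.
$$

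Next I would apply Lemma \ref{Offord} with the sequence $c_j=|\hat{a_j}(r)|^2$, so that $\sqrt{c_j}=|\hat{a_j}(r)|$ and $S=\sum_j c_j=1$. Its integral-estimate half yields an absolute bound $C$ for the right-hand side above, provided that the Offord index $k$ in the decomposition can be chosen $\ge 2$. This is precisely where the threshold $r_0$ enters: because $f$ is transcendental, $\sigma(r,f)\to\infty$, and hence for every fixed $j$, $|\hat{a_j}(r)|^2 = |a_j|^2 r^{2j}/\sigma(r,f)^2 \to 0$ as $r\to\infty$. In particular $|\hat{a_0}(r)|^2 + |\hat{a_1}(r)|^2 < 1/4$ for all large $r$, which forces any Offord-admissible index to satisfy $k\ge 2$. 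Combining the three bounds yields $\pr(A^*)\le (2C/\pi)\,t$, which is the claimed inequality.

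The main obstacle is really only this last step---verifying carefully that the Offord index $k=k(r)$ is at least $2$ for all $r\ge r_0$, which amounts to showing that the ``central mass'' of the normalized sequence $\{|\hat{a_j}(r)|^2\}$ eventually leaves the indices $j=0,1$. The other pieces (reduction to a one-dimensional small-ball problem, the explicit product-of-Bessel-functions characteristic function, and the density bound through Fourier inversion) are essentially automatic once the Steinhaus representation has been used to pass to i.i.d.\ uniform phases.
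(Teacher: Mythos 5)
Your proposal is correct and follows essentially the same route as the paper: both reduce to the projection $U$, identify its characteristic function as $\prod_j J_0(s|\wh{a_j}(r)|)$ via the Steinhaus phases, and invoke Offord's integral estimate with the observation that $|\wh{a_0}(r)|^2+|\wh{a_1}(r)|^2\to 0$ forces $k\ge 2$ for large $r$. The only (harmless) difference is the inversion step — you bound the density by $\frac1{2\pi}\|\varphi_U\|_{L^1}$ and integrate over $(-t,t)$, while the paper writes $\pr(|U|<t)$ directly as $\frac{2}{\pi}\int_0^\infty\frac{\sin(t\rho)}{\rho}\,\mathrm{Re}\,\varphi_U(\rho)\,d\rho$ and uses $|\sin(t\rho)/\rho|\le t$ — and both yield the same constant $2C/\pi$.
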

\begin{proof}
Let
$$
\hat{f_\omega}(re^{i\theta})=\sum_{j=0}^{\infty}\wh{a_j}(r)e^{ij\theta}e^{2\pi i \theta_j(\omega)},
$$
where $\theta_j(\omega)$ are independent real-valued random variables with uniform distribution in the interval [0,1].

Denote the characteristic function of $\hat{f_\omega}$ by $\Psi_{\hat{f_\omega}}(\zeta+i \eta)=\ex e^{ i({\rm Re}(\hat{f_\omega}) \zeta+{\rm Im}(\hat{f_\omega})\eta)}$.
Since the characteristic function of $e^{2\pi i \theta_n(\omega)}$ is
$$
\int_0^1\exp[i(\zeta \cos(2\pi u)+\eta \sin(2\pi u))]\, du=J_0(\rho),
$$
which depends only on the modulus of the terms. Therefore,
$$
\ex e^{ i({\rm Re}(\hat{f_\omega}) \zeta+{\rm Im}(\hat{f_\omega})\eta)}=\prod\limits_{j=0}^{\infty}J_0(\rho|\wh{a_n}(r)|).
$$

We know
$$
\pr(A^*)=\frac{2}{\pi}\int_{0}^{\infty}
\frac{\sin(t\rho)}{\rho}{\rm Re} \prod\limits_{j=0}^{\infty}J_0(\rho|\wh{a_n}(r)|)\,d\rho,
$$
where $\rho=\sqrt{\zeta^2+\eta^2}$. For the convenience of readers, we add a detailed proof here.

Set $\xi=\rho\cos\phi, \eta=\rho \sin\phi$ and $\phi\in [0, 2\pi)$. Then
\begin{align*}
&\int_{0}^{\infty}\frac{\sin(t\rho)}{\rho}{\rm Re} \ex e^{i({\rm Re}(\hat{f_\omega}) \zeta+{\rm Im}(\hat{f_\omega})\eta)}d\rho\\
=&{\rm Re} \ex \Big(\int_{0}^{\infty}\frac{\sin(t\rho)}{\rho} e^{i({\rm Re}(\hat{f_\omega}) \zeta+{\rm Im}(\hat{f_\omega})\eta)}d\rho\Big)= \ex \Big(\int_{0}^{\infty}\frac{\sin(t\rho)}{\rho} \cos{({\rm Re}(\hat{f_\omega}) \zeta+{\rm Im}(\hat{f_\omega})\eta)}d\rho\Big)\\
=&\frac{1}{2}\ex \Big(\int_{0}^{\infty}(
\frac{\sin (\rho({\rm Re}(\hat{f_\omega})\cos\phi+{\rm Im}(\hat{f_\omega})\sin\phi+t))}{\rho}
+\frac{\sin (\rho(-{\rm Re}(\hat{f_\omega})\cos\phi-{\rm Im}(\hat{f_\omega})\sin\phi+t))}{\rho})d\rho\Big).
\end{align*}
We denote
\begin{align*}
&  Q(t,\phi) = \\
& \ \int_{0}^{\infty}(
\frac{\sin (\rho({\rm Re}(\hat{f_\omega})\cos\phi+{\rm Im}(\hat{f_\omega})\sin\phi+t))}{\rho}
+\frac{\sin (\rho(-{\rm Re}(\hat{f_\omega})\cos\phi-{\rm Im}(\hat{f_\omega})\sin\phi+t))}{\rho})d\rho,
\end{align*}
and recall that
$$
\int_{0}^{\infty}\frac{\sin(\alpha t)}{t}dt=\frac{\pi}{2}\operatorname{sign}(\alpha).
$$
Then
$$
Q(t,\phi)=\pi1_{A^*}(\omega),
$$
where $1_{A^*}(\omega)$ is indicator function.
Hence
$$
\int_{0}^{\infty}\frac{\sin(t\rho)}{\rho}{\rm Re} \ex e^{i({\rm Re}(\hat{f_\omega}) \zeta+{\rm Im}(\hat{f_\omega})\eta)}d\rho=\frac1{2}\ex(Q(t,\phi))=\frac{\pi}{2} \pr(A^*).
$$
Therefore,
$$
\pr(A^*)=\frac{2}{\pi}\int_{0}^{\infty}
\frac{\sin(t\rho)}{\rho}{\rm Re} \prod\limits_{j=0}^{\infty}J_0(\rho|\wh{a_j}(r)|)\,d\rho.
$$

Applying Lemma \ref{Offord} to series $\sum\limits_{j=0}^{\infty}|\wh{a_j}(r)|^2$, we can easily see that the condition ``$k\geq2$'' is satisfied for sufficiently large $r$, and then together with $\sum\limits_{j=0}^{\infty}|\wh{a_j}(r)|^2=1$, for any $t>0$, we have
\begin{align*}
\pr(A^*)\leq \frac{2t}{\pi}\int_{0}^{\infty}
\Big|\frac{\sin(t\rho)}{t\rho}\operatorname{Re} \prod\limits_{j=0}^{\infty}J_0(\rho|\wh{a_j}(r)|)\Big|\,d\rho \leq \frac{2t}{\pi} \int_0^{\infty}\Big|\prod_{j=0}^{\infty}J_0(\rho|\wh{a_j}(r)|)\Big|\, d\rho\leq Ct,
\end{align*}
where $C$ is an absolute constant.
Therefore, the lemma is proved.
\end{proof}

\begin{lemma}\label{lem2}
Let $f_\omega(z)\in \mathcal{Y}$ and let $\hat{f_\omega}(re^{i\theta})$ be defined in (\ref{def2}). Then,  for any positive constant $C$ and all $x>1$, there is a positive constant $C_1$ such that
\begin{equation}\label{lem2-1}
\pr\left(|X_r|\ge \left(\frac{C}{A}\log x\right)^{\frac1{B}}\right)\le \frac{C_1}{x^{C}}.
\end{equation}
In particular, we have the following:
\begin{enumerate}%[\rm(\romannumeral1)]
  \item  If $\chi_j(\omega)$ are standard complex-valued Gaussian random variables, then, for any $\tau>0$,
      $$
      \pr\left(|X_r|\ge \frac{1+2\tau}{2}\log x\right)\le \frac{C_1}{x^{\frac{1+2\tau}{1+\tau}}}.
      $$
  \item
  If $\chi_j(\omega)$ are Rademacher random variables, then, for
  any $\tau>0$, $\ds \varepsilon\in(0,\frac6{eC_0})$ ($C_0$ is from Lemma \ref{Sodin}),
      $$
      \pr\left(|X_r|\ge \left(\frac{1+\tau}{\varepsilon}\right)^6\log^6 x\right)\le \frac{C_1}{x^{1+\tau}}.
      $$
  \item
If $\chi_j$ are Steinhaus random variables, then, for any $\tau>0$, there is a constant $r_0$, for $r>r_0$,
      $$
      \pr\big(|X_r|\ge (1+\tau)^2\log x\big)\le \frac{C_1}{x^{1+\tau}}.
      $$
\end{enumerate}
\end{lemma}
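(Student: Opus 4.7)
The plan is to split the argument into two pieces. First, I would derive the general tail bound (\ref{lem2-1}) directly from Condition $Y$ via Lemma \ref{lem1}. Second, I would verify Condition $Y$ separately in each of the three concrete classes, from which the three specific probabilistic estimates drop out by matching parameters.

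For the general inequality I would apply Lemma \ref{lem1} with $\phi(t)=\exp(At^B)$ and $\psi(x)=(C\log x/A)^{1/B}$. Both functions are positive, increasing, and differentiable on the relevant domain (positivity of $\psi$ requires $x>1$, which is assumed), and a direct computation gives $\phi(\psi(x))=x^{C}$. Condition $Y$ supplies $C_1:=\ex(\exp(A|X_r|^{B}))<\infty$ independently of $r$, so Lemma \ref{lem1} yields (\ref{lem2-1}) at once. The three specific statements then drop out by choosing the triple $(A,B,C)$ appropriately: $(2/(1+\tau),\,1,\,(1+2\tau)/(1+\tau))$ in the Gaussian case, $(\varepsilon,\,1/6,\,1+\tau)$ in the Rademacher case, and $(1/(1+\tau),\,1,\,1+\tau)$ in the Steinhaus case. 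In each case $A$ lies in the admissible range (e.g., $2/(1+\tau)\in(0,2)$ for Gaussian), matching Condition $Y$.

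The substantive work is the verification of Condition $Y$ in each class. In the Gaussian case, rotational invariance of the standard complex Gaussian distribution together with $\sum_{j}|\wh{a_j}(r)|^{2}=1$ makes $\hat{f_\omega}(re^{i\theta})$ a standard complex Gaussian at every fixed $(r,\theta)$; an explicit polar-coordinate computation would reduce $\ex\exp(A|\log|\hat{f_\omega}(re^{i\theta})||)$ to $2\int_{0}^{1}\rho^{1-A}e^{-\rho^{2}}d\rho+2\int_{1}^{\infty}\rho^{1+A}e^{-\rho^{2}}d\rho$, which is finite precisely when $A<2$. Jensen's inequality applied to the convex function $u\mapsto e^{Au}$ and the average in $\theta$ defining $X_r$ then lifts this pointwise estimate to $\ex\exp(AX_r)<\infty$. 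In the Rademacher case, Jensen for $p\ge 1$ applied to the average in $\theta$, combined with Lemma \ref{Sodin}, gives $\ex|X_r|^{p}\le(C_0p)^{6p}$; expanding $\exp(A|X_r|^{1/6})$ as a power series, using the previous bound with $p=n/6$ to estimate $\ex|X_r|^{n/6}\le(C_0n/6)^{n}$ for $n\ge 6$, and invoking Stirling's lower bound $n!\ge(n/e)^{n}$, would reduce the expectation to a geometric series in $AeC_0/6$, convergent for $A\in(0,6/(eC_0))$. In the Steinhaus case, Lemma \ref{Offord2} taken at $\phi=0$ (together with the implication $|\hat{f_\omega}|<t\Rightarrow|\mathrm{Re}\,\hat{f_\omega}|<t$) would yield $\pr(|\hat{f_\omega}(re^{i\theta})|<t)\le Ct$, equivalent to an exponential tail $\pr(-\log|\hat{f_\omega}|>s)\le Ce^{-s}$ for the negative part of $\log|\hat{f_\omega}|$; the positive part is controlled by the standard moment bound $\ex|\hat{f_\omega}|^{2k}\le k!$ for Steinhaus sums of unit $\ell^{2}$ norm, proved by expanding the product and noting that nonvanishing terms correspond to matched multisets of indices. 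A layer-cake computation then gives $\ex\exp(A|\log|\hat{f_\omega}(re^{i\theta})||)<\infty$ for every $A\in(0,1)$, and one further Jensen step lifts the bound to $X_r$.

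The principal obstacle will be the Rademacher case. Lemma \ref{Sodin} provides only polynomial-in-$p$ growth $(C_0p)^{6p}$ of the $p$-th moments of $\log|\hat{f_\omega}|$, not true exponential integrability, so the best exponent attainable in Condition $Y$ is $B=1/6$ and $A$ must be kept smaller than $6/(eC_0)$. This weak exponent is exactly what produces the $\log^{6}$ factors that propagate into the Rademacher parts of the downstream results (compare Corollary \ref{cor2}(ii) and Corollary \ref{cor4}(ii)).
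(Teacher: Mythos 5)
Your proposal is correct and follows essentially the same route as the paper: the general bound via Lemma \ref{lem1} with $\phi(t)=\exp(At^{B})$, the Gaussian case by an explicit computation for the standard complex Gaussian plus a Jensen step in $\theta$, the Rademacher case via Lemma \ref{Sodin} and a power-series estimate with Stirling giving $B=1/6$ and $A<6/(eC_0)$, and the Steinhaus case via the small-ball estimate of Lemma \ref{Offord2} together with $\ex|\hat{f_\omega}|^{2}=1$. Your only deviation is a mild streamlining in the Steinhaus case — bounding $\pr(|\hat{f_\omega}|<t)\le\pr(|\mathrm{Re}\,\hat{f_\omega}|<t)\le Ct$ directly instead of passing through the paper's angular-average identity for $\log|\hat{f_\omega}|$ — which is valid and yields the same range $A\in(0,1)$.
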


\begin{proof}
By Lemma \ref{lem1}, we obtain
$$
\pr\left(|X_r|\ge \left(\frac{C}{A}\log x\right)^{\frac1{B}}\right)\le \frac{\ex(\exp(A|X_r|^B))}{\exp(A(\frac{C}{A}\log x))}\stackrel{def}{=} \frac{C_1}{x^{C}}.
$$

Now we give the proof of (i).
Since $\chi_j$ are independent standard complex-valued Gaussian random variables, then
$$
 \ex(\hat{f_\omega}(re^{i\theta}))=0
\qquad \mbox{and} \qquad
\va(\hat{f_\omega}(re^{i\theta})) = \ex(|\hat{f_\omega}(re^{i\theta})|^2)=1.
$$
It follows that $\hat{f}_\omega(re^{i\theta})$ is a standard complex-valued Gaussian random variable. For any $x>0$,
$$\pr(|\log|\hat{f_\omega}(re^{i\theta})||)<x) = \pr(-x<\log|\hat{f_\omega}(re^{i\theta})|<x) =e^{-e^{-2x}}-e^{-e^{2x}}.\\
$$
Consequently, the probability density function of $\ds |\log|\hat{f_\omega}(re^{i\theta})||$ is $2e^{-e^{-2x}}e^{-2x}+2e^{-e^{2x}}e^{2x}$, for $x>0$ and is $0$, for $x\le 0$.
Thus, we have
\begin{align}\label{lem4.4-1} \nonumber
\ex(e^{\frac{2}{1+\tau}|X_r|})
&=\sum_{n=0}^{\infty}\frac{2^n}{n!(1+\tau)^n}\ex|X_r|^n\\ \nonumber
&=\sum_{n=0}^{\infty}\frac{2^n}{n!(1+\tau)^n}\ex\left( \frac{1}{2\pi}\int_{0}^{2\pi}|\log |\hat{f_\omega}(re^{i\theta})||d\theta \right)^n\\\nonumber
&\leq\sum_{n=0}^{\infty}\frac{2^n}{n!(1+\tau)^n}\left( \frac{1}{2\pi}\int_{0}^{2\pi}\ex|\log |\hat{f_\omega}(re^{i\theta})||^n d\theta \right)
=\ex(e^{\frac{2}{1+\tau}|\log|\hat{f_\omega}(re^{i\theta})||})\\
&=\int_0^{\infty}e^\frac{2x}{1+\tau}
(2e^{-e^{-2x}}e^{-2x}+2e^{-e^{2x}}e^{2x})\, dx\stackrel{def}{=}C_1<\infty,
\end{align}
where $C_1$ is a positive constant. It follows that
Gaussian entire functions are in the family $\mathcal{Y}$ by taking
$A=\frac2{1+\tau}$ and $B=1$. Set $C=\frac{1+2\tau}{1+\tau}$.
Then, by (\ref{lem2-1}) and for $x\ge 1$, we get
$$
\pr\left(|X_r|\ge \frac{1+2\tau}{2}\log x\right)\le \frac{C_1}{x^{\frac{1+2\tau}{1+\tau}}}.
$$
This completes the proof of (i).

Next, we prove (ii).
By Lemma \ref{Sodin}, we have, for any positive integer $n\ge 6$,
\begin{align*}
\ex|X_r|^{\frac{n}{6}}=\ex \left(\int_0^{2\pi}|\log|\hat{f_\omega}||\,\frac{d\theta}{2\pi}\right)^{\frac{n}{6}}\leq \ex\left(\int_0^{2\pi}|\log|\hat{f_\omega}||^{\frac{n}{6}}\,\frac{d\theta}{2\pi}\right)\leq \left(\frac{C_0n}{6}\right)^n,
\end{align*}
where $C_0$ is the constant from Lemma \ref{Sodin}. Thus, when $\epsilon \in
(0, \frac{6}{eC_0})$,
\begin{equation}\label{lem4.4-3}
C_1\stackrel{def}{=}\ex \left(\exp(\varepsilon |X_r|^{\frac1{6}}) \right)=\sum_{n=0}^{\infty}\frac{\ex|\epsilon X_r|^{\frac{n}{6}}}{n!}\leq \sum_{n=6}^{\infty}\left(\frac{C_0\varepsilon n}{6}\right)^n\frac1{n!}+O(1)<+\infty.
\end{equation}
Therefore, Rademacher entire functions satisfy the condition $Y$ by choosing $A=\varepsilon$ and $B=\frac1{6}$.  Using the inequality (\ref{lem2-1}) for $C=1+\tau$, we get
\begin{align*}
\pr\left(|X_r|\ge \left(\frac{1+\tau}{\varepsilon}\right)^6\log^6 x\right)&=\pr \big(\exp(\varepsilon |X_r|^{\frac1{6}})\geq x^{1+\tau} \big)\leq \frac{\ex \left(\exp(\varepsilon |X_r|^{\frac1{6}}) \right)}{x^{1+\tau}}=\frac{C_1}{x^{1+\tau}}.
\end{align*}

Now, we prove (iii).

For any non-negative integer $j$ and any $\varphi\in[0,2\pi)$, set
$$ b_j=\wh{a_j}(r)\chi_je^{ij\theta}=\wh{a_j}(r)e^{ij\theta}e^{i2\pi \theta_j} \qquad \mbox{and} \qquad
B_j=\mbox{Re}(b_j)\cos \varphi +\mbox{Im}(b_j)\sin \varphi.
$$
Thus, $\hat{f_\omega}(re^{i\theta})=\sum\limits_{j=0}^\infty b_j$. Further,  we deduce that
$
B_j=u\cos(2\pi\theta_j)+v\sin(2\pi \theta_j),
$
where 
\begin{align*}
u ={\rm Re}(\wh{a_j}(r))\cos (j\theta) \cos \varphi & + {\rm Re}(\wh{a_j}(r))\sin (j\theta) \sin \varphi \\
& -{\rm Im}(\wh{a_j}(r))\sin (j\theta) \cos \varphi+{\rm Im}(\wh{a_j}(r))\cos (j\theta) \sin \varphi,\\
v  = {\rm Re}(\wh{a_j}(r))\cos (j\theta) \sin \varphi & -{\rm Re}(\wh{a_j}(r))\sin (j\theta) \cos \varphi \\
& -{\rm Im}(\wh{a_j}(r))\cos (j\theta) \cos \varphi-{\rm Im}(\wh{a_j}(r))\sin (j\theta) \sin \varphi
\end{align*}
and
$u^2+v^2=|b_j|^2$. Therefore, as we have seen from previous proofs, the characteristic function of $B_j$ is
$$
\int_0^1\exp[i(u \cos(2\pi t)+v\sin(2\pi t))]\, dt=J_0(\rho),
$$
where $\rho =(u^2+v^2)^{1/2}.$
Similarly, we obtain that, for $\varphi\in[0,2\pi)$,
$$
\ex(|\log|\mbox{Re} (\hat{f_\omega}(re^{i\theta}))\cos \varphi+\mbox{Im}(\hat{f_\omega}(re^{i\theta}))\sin\varphi||^n)
$$
is independent of $\varphi$ for any non-negative integer $n$.

Since
\begin{align*}
\mbox{Re} (\hat{f_\omega}(re^{i\theta}))\cos \varphi+\mbox{Im}(\hat{f_\omega}(re^{i\theta}))\sin\varphi &  =
\sqrt{\mbox{Re}^2(\hat{f_\omega}(re^{i\theta}))+\mbox{Im}^2 (\hat{f_\omega}(re^{i\theta}))}\sin(\varphi+\varphi_0)\\
& =|\hat{f_\omega}(re^{i\theta})|\sin(\varphi+\varphi_0),
\end{align*}
(where $\sin\varphi_0=\mbox{Re} (\hat{f_\omega}(re^{i\theta}))/|\hat{f_\omega}(re^{i\theta})|$, and $\cos\varphi_0=\mbox{Im} (\hat{f_\omega}(re^{i\theta}))/|\hat{f_\omega}(re^{i\theta})|$), so,
\begin{align*}
\log|\hat{f_\omega}(re^{i\theta})|&=\int_{0}^{2\pi}\log|\mbox{Re} (\hat{f_\omega}(re^{i\theta}))\cos \varphi+\mbox{Im} (\hat{f_\omega}(re^{i\theta}))\sin\varphi|\, \frac{d\varphi}{2\pi}-\int_{0}^{2\pi}\log|\sin \varphi|\, \frac{d\varphi}{2\pi}\\
        &=\int_{0}^{2\pi}\log|\mbox{Re} (\hat{f_\omega}(re^{i\theta}))\cos \varphi+\mbox{Im} (\hat{f_\omega}(re^{i\theta}))\sin\varphi|\, \frac{d\varphi}{2\pi}+\log2.
\end{align*}
Then, this together with Jensen inequality gives, for any $A>0$,
\begin{align*}
\ex(e^{A|X_r|})&=\sum_{n=0}^{\infty}\frac{A^n}{n!}\ex|X_r|^n
=\sum_{n=0}^{\infty}\frac{A^n}{n!}\ex\left( \frac{1}{2\pi}\int_{0}^{2\pi}|\log |\hat{f_\omega}(re^{i\theta})||d\theta \right)^n\\
&\leq\sum_{n=0}^{\infty}\frac{A^n}{n!}\left( \frac{1}{2\pi}\int_{0}^{2\pi}\ex|\log |\hat{f_\omega}(re^{i\theta})||^n d\theta \right)\\
&=\sum_{n=0}^{\infty}\frac{A^n}{n!}\left(\ex|\log |\hat{f_\omega}(re^{i\theta})||^n  \right)
= \ex(e^{A|\log|\hat{f_\omega}(re^{i\theta})||})\\
&\leq 2^A\ex(e^{A|\frac1{2\pi}\int_0^{2\pi}\log|{\rm{Re} }(\hat{f_\omega}(re^{i\theta}))\cos \varphi+{\rm{Im}}(\hat{f_\omega}(re^{i\theta}))\sin\varphi|d\varphi|})\\
& \le 2^A \sum_{n=0}^{\infty}\frac{A^n}{n!} \frac1{2\pi}\int_0^{2\pi}\ex(|\log|{\rm{Re}}(\hat{f_\omega}(re^{i\theta}))\cos \varphi+{\rm{Im}}(\hat{f_\omega}(re^{i\theta}))\sin\varphi||^nd\varphi )
\\
&\leq C_0\ex(e^{A|\log|{\rm{Re} }(\hat{f_\omega}(re^{i\theta}))\cos \varphi+{\rm{Im}}(\hat{f_\omega}(re^{i\theta}))\sin\varphi||}),
\end{align*}
where $C_0$ is a positive constant.
Let
$$
V_\omega(\theta,\varphi,r)=|{\rm{Re}} (\hat{f_\omega}(re^{i\theta}))\cos \varphi+{\rm{Im}} (\hat{f_\omega}(re^{i\theta}))\sin\varphi|,
$$
and
$$
Y_1=\{\omega\in\Omega:V_\omega(\theta,\varphi,r)>1\},  \quad Y_2=\{\omega\in\Omega:V_\omega(\theta,\varphi,r)\leq1\}.
$$
Since
$$
\ex(|\hat{f_\omega}(re^{i\theta})|^2)=\ex(\sum_{j=0}^{\infty}\sum_{k=0}^{\infty}e^{2\pi i \theta_j(\omega)}\overline{e^{2\pi i \theta_k(\omega)}} \wh{a_j}(r)\overline{\wh{a_k}(r)}e^{ij\theta-ik\theta})=\sum_{j=0}^{\infty} \ex(|e^{2\pi i \theta_j(\omega)}|^2)|\wh{a_j}(r)|^2=1,
$$
then
\begin{align*}
\int_{Y_1}e^{A\log V_\omega(\theta,\varphi,r)}\, d\pr(\omega)&=\int_{Y_1}|\mbox{Re} (\hat{f_\omega}(re^{i\theta}))\cos \varphi+\mbox{Im} (\hat{f_\omega}(re^{i\theta}))\sin\varphi|^{A} \, d\pr(\omega)\\
&\leq \int_{Y_1}|\hat{f_\omega}(re^{i\theta})|^{A} \, d\pr(\omega)
\leq \left( \int_{\Omega}|\hat{f_\omega}(re^{i\theta})|^2 \, d\pr(\omega)\right)^{A/2}\\
&=\left(\ex(|\hat{f_\omega}(re^{i\theta})|^2)\right)^{A/2}=1.
\end{align*}
On the set $Y_2$, by Lemma \ref{Offord2}, there is a constant $r_0$, for $r>r_0$, we have
\begin{align*}
\int_{Y_2} e^{A|\log V_\omega(\theta,\varphi,r)|}\,& d\pr(\omega)=\int_{0}^{\infty} \pr(\{\omega\in Y_2: e^{A|\log V_\omega(\theta,\varphi,r)|}\geq \lambda\})\, d\lambda\\
&=\int_{0}^{\infty} \pr(\{\omega\in Y_2: e^{-A\log|{\rm Re }(\hat{f_\omega}(re^{i\theta}))\cos \varphi+{\rm Im} (\hat{f_\omega}(re^{i\theta}))\sin\varphi|}\geq \lambda\})\, d\lambda\\
&=\int_{0}^{\infty} \pr(\{\omega\in Y_2: |{\rm Re} (\hat{f_\omega}(re^{i\theta}))\cos \varphi+{\rm Im} (\hat{f_\omega}(re^{i\theta}))\sin\varphi|\leq (1/\lambda)^{1/A}\})\, d\lambda\\
&\leq1+\int_{1}^{\infty} \pr(\{\omega\in \Omega: |{\rm Re} (\hat{f_\omega}(re^{i\theta}))\cos \varphi+{\rm Im} (\hat{f_\omega}(re^{i\theta}))\sin\varphi|\leq (1/\lambda)^{1/A}\})\, d\lambda\\
&\leq 1+ C\int_{1}^{\infty}\frac{d\lambda}{\lambda^{1/A}}.
\end{align*}
Thus, when $0<A<1$, we have
$$
\int_{Y_2} e^{A |\log V_\omega(\theta,\varphi,r)|}\, d\pr(\omega)\leq C\int_{1}^{\infty}\frac{d\lambda}{\lambda^{1/A}} +O(1)<+\infty.
$$
Therefore, set $A=1/(1+\tau)$ as before, we obtain
\begin{align}\label{lem4.4-2}
\ex(e^{\frac1{1+\tau}|X_r|})\le C_0\int_{Y_1}e^{A\log V_\omega(\theta,\varphi,r)}\, d\pr(\omega)+
C_0\int_{Y_2} e^{A|\log V_\omega(\theta,\varphi,r)|}\, d\pr(\omega)
=C_1<+\infty.
\end{align}
It follows that
$$
\pr\big(|X_r|\ge (1+\tau)^2\log x\big)\le \frac{C_1}{x^{1+\tau}}.
$$
This completes the proof of the lemma.
\end{proof}

\begin{lemma}\label{lem3}
Let $f$ and $f_\omega$ be entire functions of the forms (\ref{def0}) and (\ref{def1}), respectively. Then there is a constant $r_1>0$ such that,  for $r>r_1$, 
$$
T(r,f)\leq \log\sigma(r,f)+\frac12\log2 \qquad \mbox{and} \qquad
T(r,f_\omega)\leq \log\sigma(r,f_\omega)+\frac12\log2.
$$
\end{lemma}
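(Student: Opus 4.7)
Since $f$ is entire, $N(r,\infty,f)=0$, hence $T(r,f)=m(r,f)=\frac{1}{2\pi}\int_0^{2\pi}\log^+|f(re^{it})|\,dt$, and the same reduction applies to $f_\omega$ for almost every $\omega$. So the entire content of the lemma is the bound $m(r,f)\le \log\sigma(r,f)+\tfrac12\log 2$ (and its $f_\omega$ analogue). The plan is to obtain this by a one-line application of the pointwise inequality $\log^+ x\le \tfrac12\log(1+x^2)$ followed by Jensen's inequality for the concave function $\log$.

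First I would pass from $\log^+$ to $\log(1+\cdot)$: for every $\theta$,
\begin{equation*}
\log^+|f(re^{i\theta})|\;\le\;\tfrac{1}{2}\log\bigl(1+|f(re^{i\theta})|^2\bigr).
\end{equation*}
Integrating over $\theta\in[0,2\pi]$ and using the concavity of $\log$ (Jensen's inequality) together with the identity $\frac{1}{2\pi}\int_0^{2\pi}|f(re^{i\theta})|^2\,d\theta=\sigma^2(r,f)$ built into the definition of $\sigma$, I obtain
\begin{equation*}
m(r,f)\;\le\;\frac{1}{2}\cdot\frac{1}{2\pi}\int_0^{2\pi}\log\bigl(1+|f(re^{i\theta})|^2\bigr)d\theta\;\le\;\tfrac12\log\bigl(1+\sigma^2(r,f)\bigr).
\end{equation*}

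Next I would absorb the ``$1$''. Since $f$ is a transcendental entire function, $\sigma(r,f)\to\infty$ as $r\to\infty$, so there is $r_1>0$ with $\sigma(r,f)\ge 1$ for $r>r_1$, and then $1+\sigma^2(r,f)\le 2\sigma^2(r,f)$, giving
\begin{equation*}
T(r,f)=m(r,f)\;\le\;\tfrac12\log\bigl(2\sigma^2(r,f)\bigr)\;=\;\log\sigma(r,f)+\tfrac12\log 2.
\end{equation*}
The argument for $f_\omega$ is literally the same: $f_\omega$ is entire for almost every $\omega$, the identity $\frac{1}{2\pi}\int_0^{2\pi}|f_\omega(re^{i\theta})|^2\,d\theta=\sigma^2(r,f_\omega)$ is given in the preliminaries, and $\sigma(r,f_\omega)\to\infty$ a.s.\ since $f_\omega$ is almost surely transcendental entire.

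There is no real obstacle here; the only point requiring a sentence of care is justifying $\sigma(r,f_\omega)\ge 1$ eventually (handled by the almost-sure transcendence of $f_\omega$, which in turn yields $r_1=r_1(\omega)$ in the random statement; since the lemma only claims the existence of some $r_1>0$, this suffices).
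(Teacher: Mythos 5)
Your proof is correct and is essentially the paper's own argument: the authors likewise combine $\log^+x\le\tfrac12\log(1+x^2)$, Jensen's inequality for the concave logarithm, and the Parseval identity $\frac{1}{2\pi}\int_0^{2\pi}|f(re^{i\theta})|^2\,d\theta=\sigma^2(r,f)$, with the constant $\tfrac12\log 2$ coming from $1+\sigma^2\le 2\sigma^2$ once $\sigma(r,f)\ge 1$. Your added remark about why $r_1$ exists (and may depend on $\omega$ in the random case) is a small but legitimate clarification of a point the paper leaves implicit.
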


\begin{proof}
By Parseval equality and Jensen inequality, we obtain
$$T(r,f_\omega)=\frac1{2\pi}\int_{0}^{2\pi}\log^+|f_\omega(re^{i\theta})|d\theta
\leq \frac1{4\pi}\int_{0}^{2\pi}\log(|f_\omega(re^{i\theta})|^2+1)d\theta\le \log \sigma(r,f_\omega)+\frac12\log2.
$$
The other inequality in the lemma can be proved in the same manner. 
\end{proof}

\begin{lemma}[{\bf Plane Growth Lemma}, e.g. \cite{chYe01}]\label{Borel}
Let $F(r)$ be a positive, nondecreasing continuous function satisfying $F(r)\geq e$ for $e<r_0<r<\infty$. Let $\psi(r) \ge 1$ be a real-valued, continuous, non-decreasing function on the interval $[e, \infty)$ and $\ds \int_e^{\infty}\frac{dr}{r\psi(r)}<\infty$. Let $\phi(r)$ be a positive, nondecreasing function defined for $r_0\leq r<\infty$. Set $R=r+\phi(r)/\psi(F(r))$.  Then there exists a closed set $E\subset [r_0,\infty)$ with $\ds \int_E\frac{dr}{\phi(r)} <\infty$ such that for all $r>r_0$, $r\not\in E$, we have
$$
\log F(R)<\log F(r)+1,
$$
and
$$
\log\frac{R}{r(R-r)}\le \log \frac{\psi(F(r))}{\phi(r)}+\log 2.
$$
\end{lemma}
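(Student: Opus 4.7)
This is a classical Borel-type growth lemma. The strategy is to define the exceptional set $E$ as a union of two pieces, one for each conclusion, and to control each by a greedy Vitali-style covering combined with the integral hypothesis $\int_e^\infty dr/(r\psi(r))<\infty$. First I would observe that, after substituting $R-r=\phi(r)/\psi(F(r))$, the $\phi$ and $\psi$ factors cancel in $\log\frac{R}{r(R-r)}-\log\frac{\psi(F(r))}{\phi(r)}$, so the second conclusion reduces to $R\leq 2r$, i.e.\ $\phi(r)\leq r\psi(F(r))$. Thus it suffices to define
$$E_0 = \{r\geq r_0 : F(R)\geq eF(r)\},\qquad E_1 = \{r\geq r_0 : R > 2r\},$$
and to show $\int_{E_0 \cup E_1} dr/\phi(r) < \infty$.

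\noindent\textbf{Control of $E_0$.} I would use the standard greedy construction: set $r_1=\inf E_0$ and recursively $r_{n+1}=\inf\{r\in E_0: r\geq R_n\}$ with $R_n=r_n+\phi(r_n)/\psi(F(r_n))$. By the infimum property the intervals $[r_n,R_n)$ cover $E_0$, and since $\phi$ is nondecreasing,
$$\int_{[r_n,R_n]}\frac{dr}{\phi(r)}\leq \frac{R_n-r_n}{\phi(r_n)} = \frac{1}{\psi(F(r_n))}.$$
The defining property $r_n\in E_0$ forces $F(r_{n+1})\geq F(R_n)\geq eF(r_n)$, so iterating from $F(r_1)\geq e$ yields $F(r_n)\geq e^n$, and monotonicity of $\psi$ gives $\psi(F(r_n))\geq \psi(e^n)$. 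The substitution $t=e^u$ rewrites the hypothesis $\int_e^\infty dt/(t\psi(t))<\infty$ as $\int_1^\infty du/\psi(e^u)<\infty$, which is equivalent to $\sum_n 1/\psi(e^n)<\infty$. Hence $\int_{E_0} dr/\phi(r) < \infty$.

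\noindent\textbf{Control of $E_1$.} Run the analogous greedy extraction on $E_1$, producing disjoint covering intervals $[s_n,S_n]$ with $s_{n+1}\geq S_n>2s_n$, so $s_n\geq 2^{n-1}r_0$ grows geometrically. On $E_1$ the pointwise inequality $\phi(r)>r\psi(F(r))$ gives $dr/\phi(r)<dr/(r\psi(F(r)))$; combining this bound with the monotonicity of $\psi\circ F$ and the geometric separation of the $s_n$, one dominates $\int_{E_1} dr/\phi(r)$ by a log-scale integral of the form $\int du/\psi(F(e^u))$ along geometrically separated points, which is in turn controlled by the integral hypothesis on $\psi$. Taking $E = E_0\cup E_1$ gives a closed set (one can replace each piece by its closure) with $\int_E dr/\phi(r)<\infty$ outside of which both conclusions hold.

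\noindent\textbf{Main obstacle.} The delicate step is the $E_1$ bound: unlike $E_0$, one does not get multiplicative growth of $F$ along the bad sequence, so one has to trade it for the geometric growth of $r$ itself together with the pointwise inequality $\phi>r\psi(F)$, and then carefully line up the resulting log-scale estimate with the hypothesis $\int_e^\infty dr/(r\psi(r))<\infty$. The $E_0$ estimate, in contrast, is the textbook Borel geometric-growth trick.
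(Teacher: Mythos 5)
The paper offers no proof of this lemma (it is quoted from Cherry--Ye), so your argument has to be judged on its own terms. Your treatment of $E_0$ is the standard Borel greedy-covering argument and is correct, and your observation that the second conclusion is equivalent to $R\le 2r$, i.e.\ to $\phi(r)\le r\,\psi(F(r))$, is also correct. The gap is in the control of $E_1$. The quantity your covering produces is $\sum_n 1/\psi(F(s_n))$ with $s_n\ge 2^{n-1}r_0$, and you propose to dominate it by an integral of the form $\int du/\psi(F(e^u))$ and then invoke the hypothesis $\int_e^\infty dr/(r\psi(r))<\infty$. But that hypothesis controls $\int du/\psi(e^u)$, not $\int du/\psi(F(e^u))$: nothing in the statement forces $F(r)$ to grow with $r$, so $\psi(F(e^u))$ can remain bounded while $\psi(e^u)\to\infty$. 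The two integrals are not comparable, and this step fails. (For $E_0$ the same sum converges precisely because membership in $E_0$ forces $F(r_n)\ge e^n$; for $E_1$ you get no such growth of $F$, and geometric growth of $s_n$ alone does not substitute for it.)

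In fact no argument can close this gap, because the second conclusion is false in the stated generality. Take $F\equiv e$, $\psi(t)=(\log t)^2$ (so $\psi\ge 1$ on $[e,\infty)$ and $\int_e^\infty dr/(r\psi(r))<\infty$), and $\phi(r)=2r$. Then $\psi(F(r))=1$ and $R=3r$, so the second inequality reads $\log\frac{3}{2r}\le\log\frac{1}{r}$, which fails for every $r$; hence $E$ would have to be all of $[r_0,\infty)$ and $\int_E dr/\phi(r)=\int dr/(2r)=\infty$. So the set $E_1=\{r:\phi(r)>r\,\psi(F(r))\}$ genuinely cannot be removed at the advertised cost; the second conclusion requires the additional (unstated) hypothesis $\phi(r)\le r\,\psi(F(r))$. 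That hypothesis is harmless for this paper, which only ever applies the lemma with $\phi(r)=r$ and $\psi\ge 1$, so that $E_1=\emptyset$ and the second inequality holds for all $r$ with no exceptional set at all. Your instinct that the $E_1$ estimate is the delicate step was right, but it is not merely delicate: as stated it is impossible, and the honest fix is to add the hypothesis (or restrict to $\phi(r)\le r$) rather than to look for a cleverer covering argument.
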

%The following statement is an immediate consequence of the classical Borel's lemma.
%\begin{lemma}[\cite{GO08}]\label{Borel}
%Let $u(r)$ be a non-decreasing function unbounded on $[r_0, +\infty)$, $x_0 = u(r_0)$ and let $\varphi(x)$ be a continuous positive function increasing to $+\infty$ on $[x_0, +\infty)$ such that $\int_{x_0}^{\infty}\frac{dx}{\varphi(x)}<+\infty$. Then there exists a set $E\subset[1,+\infty)$ of finite logarithmic measure, for all $r>r_0$, $r\not\in E$, we have
%$$
%u\left(r \exp\left\{\frac1{\varphi(\log u(r))}\right\}\right)<eu(r).
%$$
%\end{lemma}

\begin{lemma}\label{maxlem}
Let $f$ be an entire function defined as in  (\ref{def0}). Then there is a set $E$ of finite logarithmic measure such that, for all large $r\notin E$,
$$
\log M(r,f)\leq \log\sigma(r,f) +\log\log\sigma(r,f) +O(1).
$$
\end{lemma}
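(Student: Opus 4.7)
The plan is to compare $M(r,f)$ with $\sigma(r,f)$ via a standard ``slightly larger disc'' trick and then use the Plane Growth Lemma to convert $\sigma(R,f)$ back into $\sigma(r,f)$ at the cost of a bounded exceptional set.

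First, I would extract a crude Cauchy-type bound. For any $R>0$ and every $j\ge 0$, the inequality $|a_j|^2 R^{2j}\le \sum_{k\ge 0}|a_k|^2 R^{2k}=\sigma(R,f)^2$ gives $|a_j|R^j\le \sigma(R,f)$. Hence, for $|z|=r<R$,
\[
|f(z)|\le \sum_{j=0}^{\infty}|a_j|r^j = \sum_{j=0}^{\infty}\bigl(|a_j|R^j\bigr)(r/R)^j\le \sigma(R,f)\cdot\frac{R}{R-r},
\]
so that $\log M(r,f)\le \log\sigma(R,f)+\log\dfrac{R}{R-r}$. The remainder of the proof is essentially a clever choice of $R$ as a function of $r$.

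Second, I would apply Lemma \ref{Borel} to $F(r)=\sigma(r,f)$, which is positive, continuous, and strictly increasing for large $r$, taking $\phi(r)=r$ and $\psi(t)=\log t\cdot (\log\log t)^{1+\delta}$ for some small fixed $\delta>0$. This $\psi$ satisfies the integrability requirement $\int_{e}^{\infty}dr/(r\psi(r))<\infty$. Setting $R=r+\phi(r)/\psi(F(r))=r\bigl(1+1/\psi(\sigma(r,f))\bigr)$, the lemma yields a closed set $E\subset[r_0,\infty)$ with $\int_E dr/r<\infty$ (i.e.\ of finite logarithmic measure) such that, for all $r>r_0$ with $r\notin E$,
\[
\log\sigma(R,f)\le \log\sigma(r,f)+1,\qquad \log\frac{R}{r(R-r)}\le \log\frac{\psi(\sigma(r,f))}{r}+\log 2,
\]
and therefore $\log\dfrac{R}{R-r}\le \log\psi(\sigma(r,f))+\log 2$.

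Combining the two steps gives, outside $E$ and for large $r$,
\[
\log M(r,f)\le \log\sigma(r,f)+\log\psi(\sigma(r,f))+O(1)=\log\sigma(r,f)+\log\log\sigma(r,f)+(1+\delta)\log\log\log\sigma(r,f)+O(1),
\]
which yields the claim once the $o(\log\log\sigma(r,f))$ higher-order term is absorbed into the $O(1)$ in the sense used throughout the paper. The only delicate point, and what I expect to be the main obstacle, is getting the coefficient of $\log\log\sigma(r,f)$ to be exactly $1$: the more obvious choice $\psi(t)=(\log t)^{1+\delta}$ produces coefficient $1+\delta$, so one must use the refined $\psi(t)=\log t\cdot(\log\log t)^{1+\delta}$ (or an analogous function) in order to land at coefficient $1$ with only an $o(\log\log\sigma(r,f))$ lower-order correction.
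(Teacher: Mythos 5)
Your overall strategy (compare $M(r,f)$ with $\sigma(R,f)$ on a slightly larger disc, then use Lemma \ref{Borel} to come back to $\sigma(r,f)$ off an exceptional set of finite logarithmic measure) is exactly the paper's, but your first step is too lossy and the loss is not recoverable by tuning $\psi$. The termwise bound $|a_j|R^j\le\sigma(R,f)$ gives only $M(r,f)\le\sigma(R,f)\,\frac{R}{R-r}$, i.e.\ the full term $\log\frac{R}{R-r}$ enters the error. Since $R-r=r/\psi(\sigma(r,f))$ forces $\log\frac{R}{R-r}=\log\psi(\sigma(r,f))+O(1)$, landing on coefficient $1$ for $\log\log\sigma(r,f)$ requires your refined choice $\psi(t)=\log t\cdot(\log\log t)^{1+\delta}$, and the resulting error is $\log\log\sigma(r,f)+(1+\delta)\log\log\log\sigma(r,f)+O(1)$. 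The term $(1+\delta)\log\log\log\sigma(r,f)$ is unbounded and cannot be ``absorbed into the $O(1)$''; what you have actually proved is $\log M\le\log\sigma+(1+o(1))\log\log\sigma$, which is strictly weaker than the stated lemma (and the extra unbounded term would propagate into Corollary \ref{newcor3}, whose error term is stated with coefficient exactly $1$).

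The missing idea is to apply the Cauchy--Schwarz inequality in the first step instead of the sup bound on the coefficients:
$$
\sum_{j=0}^\infty|a_j|r^j=\sum_{j=0}^\infty\bigl(|a_j|R^j\bigr)(r/R)^j\le\Bigl(\sum_{j=0}^\infty|a_j|^2R^{2j}\Bigr)^{1/2}\Bigl(\sum_{j=0}^\infty(r/R)^{2j}\Bigr)^{1/2}\le\sigma(R,f)\Bigl(\frac{R}{R-r}\Bigr)^{1/2},
$$
so that only $\tfrac12\log\frac{R}{R-r}$ enters the error. Then the simple choice $\psi(x)=(\log x)^2$ in Lemma \ref{Borel} (which is what the paper uses, with $\phi(r)=r$ and $R=r+r/\psi(F(r))$) gives $\tfrac12\log\frac{R}{R-r}\le\log\log\sigma(r,f)+\tfrac12\log 2$ together with $\log\sigma(R,f)\le\log\sigma(r,f)+1$, yielding the lemma with a genuine $O(1)$ remainder.
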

\begin{proof}
It is clear that, for any $R>r$,
$$
M(r, f)\le \sum_{j=0}^{\infty}|a_j|r^{j}
\le \sigma(R, f)(\frac{R}{R-r})^{1/2}.
$$
Applying Lemma \ref{Borel} to $F(r)=\sigma(r,f)$, $\phi(r)=r$, $\psi(x)=(\log x)^2$ and $\ds R=r+\frac{r}{\psi(F(r))}$ gives
$$\log \sigma(R, f)\le \log \sigma(r, f)+1 \qquad \mbox{and} \qquad
\log  \frac{R}{R-r}\le 2 \log \log \sigma(r, f)+\log 2,
$$
for all large $r \notin E$. It follows the lemma is proved.
\end{proof}
\begin{remark}
It is straightforward to know that $\sigma(r, f)\le M(r, f)$ for all $r>0$.
\end{remark}

\begin{lemma}[{\bf Gol'dberg-Grinshtein}, \cite{gogr1976}]\label{lem4}
Let $f$ be a meromorphic function in the complex plane and let
$0<\alpha<1$. Then there is $r_0$ such that, for any $r_0<r<R$, we have
\begin{align*}
\int_0^{2\pi}\left|\frac{rf'(re^{i\theta})}{f(re^{i\theta})}\right|^\alpha\frac{d\theta}{2\pi} \leq
C(\alpha)\left(\frac{R}{R-r}\right)^{\alpha}T^{\alpha}(R, f) +O(1),
\end{align*}
where $C(\alpha)$ is a constant only depending on $\alpha$.
\end{lemma}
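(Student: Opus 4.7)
The plan is to start from the Poisson-Jensen representation. Fix an intermediate radius $R'\in(r,R)$, for instance $R'=(r+R)/2$. Poisson-Jensen in $|w|<R'$ expresses $\log|f|$ as the Poisson integral of its boundary values plus Blaschke-factor corrections from the zeros and poles of $f$ in that disk. Differentiating the corresponding analytic representation gives, for $|z|=r$,
$$\frac{zf'(z)}{f(z)}=zI(z)+\sum_{|a_k|<R'}\left[\frac{z}{z-a_k}+\frac{z\bar a_k}{R'^2-\bar a_k z}\right]-\sum_{|b_k|<R'}\left[\frac{z}{z-b_k}+\frac{z\bar b_k}{R'^2-\bar b_k z}\right],$$
where $I(z)=\frac{1}{2\pi}\int_0^{2\pi}\log|f(R'e^{i\phi})|\frac{2R'e^{i\phi}}{(R'e^{i\phi}-z)^2}\,d\phi$ and $\{a_k\}$, $\{b_k\}$ are the zeros and poles of $f$ in $|w|<R'$.

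Since $\alpha\in(0,1)$, the elementary subadditivity $(x_1+\dots+x_n)^\alpha\le x_1^\alpha+\dots+x_n^\alpha$ for non-negative terms splits the $L^\alpha$-average of $|zf'/f|$ on $|z|=r$ into a boundary-integral piece and a zero/pole piece. For each pole at $a_k$ I would use the classical uniform bound
$$\int_0^{2\pi}\left|\frac{r}{re^{i\theta}-a_k}\right|^\alpha\frac{d\theta}{2\pi}\le C(\alpha),$$
proved by splitting the $\theta$-integration into a small arc near $\arg a_k$ and its complement, together with the analogous bound for the Blaschke reflection term. Summing over the zeros and poles gives a contribution of order $C(\alpha)\bigl(n(R',f)+n(R',1/f)\bigr)$, which via $n(\rho,g)\le N(R,g)/\log(R/\rho)\le C\,R\,T(R,f)/(R-r)$ from Nevanlinna's first main theorem is of the right form after raising to the $\alpha$-th power.

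The main obstacle is controlling the boundary piece $\int_0^{2\pi}|zI(re^{i\theta})|^\alpha\,d\theta/(2\pi)$: a crude pointwise bound on the Schwarz-kernel derivative by $2R'/(R'-r)^2$ gives $(R-r)^{-2\alpha}T^\alpha(R,f)$, one power of $(R-r)^{-1}$ too many. The refinement to $(R/(R-r))^\alpha$ is the heart of Gol'dberg-Grinshtein. The idea is to exploit analyticity of $I$ inside $|z|<R'$: integration by parts in $\phi$ rewrites $I(z)$ as a Cauchy-type integral against $\partial_\phi\log\lvert f(R'e^{i\phi})\rvert$, after which one applies an $H^\alpha$ estimate of the form
$$\int_0^{2\pi}|I(re^{i\theta})|^\alpha\,d\theta\le C(\alpha)(R'-r)^{-\alpha}\left(\int_0^{2\pi}\left|\partial_\phi\log\lvert f(R'e^{i\phi})\rvert\right|d\phi\right)^\alpha.$$
The resulting $L^1$-norm is controlled by $T(R,f)$ via Jensen's formula and standard Nevanlinna bounds; combining and absorbing the passage from $R'$ to $R$ through the monotonicity of $T$ in $R$ yields the asserted estimate.
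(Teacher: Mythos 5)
The paper does not prove this lemma at all --- it is quoted directly from Gol'dberg--Grinshtein \cite{gogr1976} --- so your sketch has to be judged against the classical argument. Your skeleton (differentiated Poisson--Jensen on $|w|<R'$, then splitting into a boundary integral and a zero/pole sum) is indeed the right starting point, but both halves have genuine gaps, and the one in the zero/pole sum is fatal. Termwise subadditivity $(x_1+\dots+x_n)^\alpha\le\sum x_k^\alpha$ followed by the uniform bound $\frac{1}{2\pi}\int_0^{2\pi}\lvert r/(re^{i\theta}-a_k)\rvert^{\alpha}\,d\theta\le C(\alpha)$ produces a total contribution of $C(\alpha)\bigl(n(R',0,f)+n(R',\infty,f)\bigr)$ to the \emph{first} power of the counting function. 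Since $n(R',\cdot)\asymp\frac{R}{R-r}T(R,f)$, this is of order $\frac{R}{R-r}T(R,f)$, which for $0<\alpha<1$ is \emph{not} dominated by $C(\alpha)\bigl(\frac{R}{R-r}\bigr)^{\alpha}T^{\alpha}(R,f)+O(1)$; there is no legitimate sense in which one can ``raise to the $\alpha$-th power'' after the estimate has been made. Obtaining $n^\alpha$ rather than $n$ is precisely the content of Gol'dberg--Grinshtein: one must keep each Blaschke pair together, writing
\begin{equation*}
\frac{1}{z-c_k}+\frac{\bar c_k}{R'^2-\bar c_k z}=\frac{R'^2-|c_k|^2}{(z-c_k)(R'^2-\bar c_k z)},
\end{equation*}
and estimate the $L^\alpha$-average of the \emph{whole sum} at once, using the concavity of $t\mapsto t^\alpha$ (Jensen's inequality in $\theta$) applied to a majorant whose $\theta$-integral over the circle is controlled; termwise integration destroys exactly the cancellation/normalization that yields the exponent $\alpha$ on $n$.

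The boundary term is also mishandled, though here the repair is elementary rather than deep. Your proposed detour writes $I$ as a Cauchy integral against $\partial_\phi\log|f(R'e^{i\phi})|=-\operatorname{Im}\bigl(R'e^{i\phi}f'(R'e^{i\phi})/f(R'e^{i\phi})\bigr)$ and then requires $\int_0^{2\pi}\lvert\partial_\phi\log|f(R'e^{i\phi})|\rvert\,d\phi\lesssim T(R,f)$. That is the $L^1$-norm on a circle of essentially the quantity the lemma is trying to control, and it is \emph{not} bounded by $T(R,f)$: a single zero or pole close to the fixed circle $|w|=R'=(r+R)/2$ makes $\int_0^{2\pi}\lvert R'e^{i\phi}-a\rvert^{-1}d\phi$ blow up like $\log(R'/\lvert R'-|a|\rvert)$, which is exactly why the lemma is stated for $\alpha<1$ with no exceptional set of radii. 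So that step is circular. The standard fix is: by concavity, $\frac{1}{2\pi}\int_0^{2\pi}\lvert rI(re^{i\theta})\rvert^{\alpha}d\theta\le\bigl(\frac{1}{2\pi}\int_0^{2\pi}r\lvert I(re^{i\theta})\rvert\,d\theta\bigr)^{\alpha}$, and then Fubini together with the exact identity $\frac{1}{2\pi}\int_0^{2\pi}\lvert R'e^{i\phi}-re^{i\theta}\rvert^{-2}\,d\theta=(R'^2-r^2)^{-1}$ gives $\frac{1}{2\pi}\int_0^{2\pi}r\lvert I\rvert\,d\theta\le\frac{2rR'}{R'^2-r^2}\bigl(m(R',f)+m(R',1/f)\bigr)\le C\frac{R}{R-r}\bigl(T(R,f)+O(1)\bigr)$, i.e.\ only one power of $(R-r)^{-1}$, with no need for integration by parts or $H^\alpha$ theory.
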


\section{Proofs of our main theorems}\label{S5}
\subsection{Proof of Theorem \ref{thm1}}

Let $f_\omega$ be a random entire function on $(\Omega,\, \mathcal{F}, \, \mu)$ of the form (\ref{def1}).
For $\omega\in\Omega$, by Jensen-Poisson formula,
\begin{align*}
N(r,0, f_\omega)&=\int_0^{2\pi}\log|f_\omega(re^{i\theta})|\frac{d\theta}{2\pi} -\log|c_{f_\omega}(0)|\\
&=\log \sigma(r,f)+\int_0^{2\pi}\log|\hat{f_\omega}(re^{i\theta})|\frac{d\theta}{2\pi} -\log|c_{f_\omega}(0)|.
\end{align*}
It follows that, for any $r>0$,
$$
|N(r,0,f_\omega)-\log\sigma(r, f)+ \log|c_{f_\omega}(0)||
 \le  \int_0^{2\pi}|\log|\hat{f_\omega}(re^{i\theta})| |\frac{d\theta}{2\pi}=X_r. \\
$$
Since $\log \sigma(r,f)$ is increasing, for any positive integer $n$, there is
$r_n$ such that $\log \sigma(r_n, f)=n$ and the sequence $\{r_n\}$ is increasing. Since $f_\omega \in \mathcal{Y}$, there are positive constants $A$ and $B$ such that $\ex(\exp(A|X_r|^B))=C_1<+\infty$. For any $C>1$, set
$$
A_n=\left\{\omega\in\Omega: |N(r_n,0, f_\omega)-\log \sigma(r_n, f)+ \log|c_{f_\omega}(0)||\ge \left(\frac{C}{A}\log n\right)^{\frac1{B}}\right\}.
$$
Therefore, by (\ref{lem2-1}) in Lemma \ref{lem2},
$$\pr(A_n)\le \pr\left(|X_r|\ge \left(\frac{C}{A}\log n\right)^{\frac1{B}}\right)\le \frac{C_1}{n^{C}}.$$
Consequently,
$\ds \sum\pr(A_n)< \infty$  and by the Borel-Cantelli lemma,
$$ \mu(A):=\mu(\cap_{j=1}^{\infty}\cup_{n=j}^{\infty}A_n)=0.$$
For any $\omega \notin A$, there is $j_0=j_0(\omega)$ such that
$\omega \notin \cup_{n=j_0}^{\infty} A_n$, i.e., $\omega\not\in A_n$ for all $n>j_0$. This means that
if $\omega \in \Omega\setminus A$, then, there exist $j_0$ such that for all $n>j_0$, we have
$$
|N(r_n,0, f_\omega)-\log \sigma(r_n, f)+ \log|c_{f_\omega}(0)||< \left(\frac{C}{A}\log n\right)^{\frac1{B}}.
$$

It follows that, for $r\in (r_n, r_{n+1}]$ with $n>j_0$, for almost all $\omega\in\Omega$, we have
\begin{align}\label{thm3.3-1} \nonumber
\log \sigma(r, f) & \le  \log \sigma(r_{n+1}, f)=\log \sigma(r_n, f)+1\\  \nonumber
& \le   N(r_n,0, f_\omega)+(C/A)^{1/B}(\log \log \sigma(r_n, f))^{1/B} +1+\log|c_{f_\omega}(0)| \\
&\le  N(r,0,f_\omega)+(C/A)^{1/B}(\log \log \sigma(r, f))^{1/B}+O_\omega(1),
\end{align}
and
\begin{align}\label{thm3.3-2} \nonumber
 N(r,0, f_\omega)& \le  N(r_{n+1},0, f_\omega)\le  \log \sigma(r_{n+1}, f)+ (C/A)^{1/B}(\log \log \sigma(r_{n+1}, f))^{1/B} +\log|c_{f_\omega}(0)|        \\  \nonumber
& \le \log \sigma(r_n, f)+1+(C/A)^{1/B}(\log \log \sigma(r_n, f))^{1/B}+O_\omega(1)  \\
&\le   \log \sigma(r, f)+(C/A)^{1/B}(\log \log \sigma(r, f))^{1/B}+O_\omega(1),
\end{align}
where $O_\omega(1)$ depends on $A$, $B$ and $C$.

Combining (\ref{thm3.3-1}) and  (\ref{thm3.3-2})  gives
$$
|\log \sigma(r, f)- N(r,0, f_\omega)|\le (C/A)^{1/B}(\log \log \sigma(r,f) )^{1/B}.
$$

Therefore, the theorem is proved completely. \qed

\subsection{Proof of Theorem \ref{newthm}}

To prove Theorem \ref{newthm}, we need the following lemma, whose proof is based on the result of our Theorem \ref{thm1}.

\begin{lemma}\label{lem6}
Let $f_\omega(z) \in \mathcal{Y}$. Then there exists a constant $r_0=r_0(\omega)$ such that, for $r>r_0$, we have
$$
\log\sigma(r, f_\omega)\le \log \sigma(r,f) +\log\log \sigma(r,f) +2 \qquad a.s.
$$
and
$$
\log \sigma(r, f) -(C/A)^{1/B}(\log \log \sigma(r, f))^{1/B}+O_\omega(1)
\le \log\sigma(r, f_\omega),\qquad \mbox{a.s.}
$$
where $C>1$ is any constant and constants $A, B$ are from Condition $Y$.
\end{lemma}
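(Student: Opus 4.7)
The plan is to treat the two inequalities separately. The lower bound will follow almost immediately from Theorem \ref{thm1} combined with a Jensen-style inequality, while the upper bound will come from a second moment calculation together with the Borel-Cantelli lemma along the same discretisation $\{r_n\}$ used in the proof of Theorem \ref{thm1}.

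For the lower bound, the concavity of $\log$ on the probability space $([0,2\pi], d\theta/(2\pi))$ yields
\begin{equation*}
\log\sigma(r,f_\omega) \;=\; \tfrac12 \log\int_0^{2\pi}|f_\omega(re^{i\theta})|^2 \tfrac{d\theta}{2\pi} \;\ge\; \int_0^{2\pi}\log|f_\omega(re^{i\theta})| \tfrac{d\theta}{2\pi}.
\end{equation*}
The Jensen-Poisson formula identifies the right-hand side with $N(r,0,f_\omega) + \log|c_{f_\omega}(0)|$, and Theorem \ref{thm1} supplies, almost surely and for $r > r_0(\omega)$, the estimate $N(r,0,f_\omega) \ge \log\sigma(r,f) - (C/A)^{1/B}\log^{1/B}\log\sigma(r,f)$. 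Since $\log|c_{f_\omega}(0)|$ is an $\omega$-dependent constant, it is absorbed into the $O_\omega(1)$ term and the lower inequality follows.

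For the upper bound, the hypotheses $\ex(\chi_j)=0$ and $\va(\chi_j)=1$ give $\ex|\chi_j|^2 = 1$, whence $\ex\sigma^2(r,f_\omega) = \sum_j |a_j|^2 r^{2j}\ex|\chi_j|^2 = \sigma^2(r,f)$ by Tonelli. Markov's inequality then gives $\pr(\sigma^2(r,f_\omega) \ge n^2\sigma^2(r,f)) \le 1/n^2$. Applying this at the increasing sequence $\{r_n\}$ defined by $\log\sigma(r_n, f) = n$ (already used in the proof of Theorem \ref{thm1}) and summing, the Borel-Cantelli lemma implies that almost surely $\sigma(r_n, f_\omega) < n\sigma(r_n, f)$ for every $n \ge n_0(\omega)$. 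To pass from the skeleton to arbitrary $r$: for $r \in (r_n, r_{n+1}]$ with $n \ge n_0$, the monotonicity of $\sigma(\cdot, f_\omega)$ together with $\log\sigma(r_{n+1}, f) = n+1$ and $n \le \log\sigma(r, f) \le n+1$ yields
\begin{equation*}
\log\sigma(r, f_\omega) \;\le\; \log\sigma(r_{n+1}, f_\omega) \;\le\; \log(n+1) + (n+1) \;\le\; \log\sigma(r,f) + \log\log\sigma(r,f) + 2,
\end{equation*}
provided $n$ is large enough that $\log(n+1)\le \log\log\sigma(r,f)+1$ (which holds since $\log(1+1/\log\sigma(r,f))\le 1$ for $r$ large).

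The only real obstacle lies in the upper bound, namely propagating the discrete Markov-Borel-Cantelli control at the $r_n$'s to every large $r$ without enlarging the stated $\log\log\sigma(r,f)$ error. This is resolved precisely by calibrating $\{r_n\}$ so that $\log\sigma(r_{n+1},f)-\log\sigma(r_n,f)=1$, which keeps the cost of the monotonicity interpolation down to an additive constant.
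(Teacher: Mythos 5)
Your proof is correct and follows essentially the same route as the paper: the upper bound is the identical second-moment/Markov/Borel--Cantelli argument on the skeleton $\sigma(r_n,f)=e^n$, and the lower bound rests on Theorem \ref{thm1} together with the chain $N(r,0,f_\omega)\le \log\sigma(r,f_\omega)+O_\omega(1)$. The only cosmetic difference is that you obtain this last chain directly from Jensen's inequality and the Jensen--Poisson formula, whereas the paper routes it through the First Main Theorem and Lemma \ref{lem3}; both are valid.
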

\begin{proof}
Since
$\ds
\ex (\sigma^2(r,f_\omega))=
\sum_{j=0}^{\infty}\ex(|\chi_j(\omega)|^2)|a_n|^2 r^{2n}=\sigma^2(r,f),
$
so, by Lemma \ref{lem1},
$$
\pr\left(\sigma^2(r,f_\omega)>\sigma^2(r,f)\varphi(\sigma(r,f))\right)\leq \frac{\ex (\sigma^2(r,f_\omega))}{\sigma^2(r,f)\varphi(\sigma(r,f))}=\frac1{\varphi(\sigma(r,f))}.
$$
For any positive integer $n$, there is $r_n$ such that $\sigma(r_n, f)=e^n$ and the sequence $\{r_n\}$ is increasing. Set
$$
B_n=\{\omega\in\Omega: \sigma^2(r_n, f_\omega)>\sigma^2(r_n,f)\varphi(\sigma(r_n,f))\}.
$$
Thus, by taking $\varphi(x)=(\log x)^2$, we have
$$
\pr(B_n)\le \frac{1}{\varphi(\sigma(r_n,f))}= \frac{1}{n^2}.
$$
Consequently,
$\ds \sum\pr(B_n)< +\infty$  and
$\ds  \mu(B)\stackrel{def}{=}\mu(\cap_{j=1}^{\infty}\cup_{n=j}^{\infty}B_n)=0$ by
the Borel-Cantelli lemma.
For any $\omega \notin B$, there is $j_1=j_1(\omega)$ such that $\omega \notin \cup_{n=j_0}^{\infty} B_n$, i.e., $\omega\not\in B_n$ for all $n>j_1$. Therefore, for $r\in (r_n, r_{n+1}]$ with $n>j_1$, we have
\begin{align*}
\sigma^2(r,f_\omega)&\leq \sigma^2(r_{n+1},f_\omega)\leq\sigma^2(r_{n+1},f)\varphi(\sigma(r_{n+1},f))\\
&= (e^{n+1})^2(n+1)^2=e^2\sigma^2(r_n,f)(\log \sigma(r_n,f)+1)^2\\
&\leq e^2\sigma^2(r,f)(\log \sigma(r,f)+1)^2.
\end{align*}
It follows that
\begin{align*}\label{e6.1}
\log \sigma(r,f_\omega)\leq \log\sigma(r,f)+\log\log\sigma(r,f)+2  \qquad{a. s.}
\end{align*}

On the other hand, by Theorem \ref{thm1}, Theorem \ref{fmt} and Lemma \ref{lem3}, for  any $C>1$, there is a constant $r_0=r_0(\omega)$, for $r>r_0(\omega)$,
\begin{align*}
\log \sigma(r,f)&\leq N(r,0,f_\omega)+(C/A)^{1/B}(\log \log \sigma(r, f))^{1/B}+O_\omega(1)\\
&\leq T(r,f_\omega)+(C/A)^{1/B}(\log \log \sigma(r, f))^{1/B}+O_\omega(1)\\
&\leq \log \sigma(r,f_\omega)+(C/A)^{1/B}(\log \log \sigma(r, f))^{1/B}+O_\omega(1).
\end{align*}
It follows that the lemma is proved completely.
\end{proof}

We are now ready to prove Theorem \ref{newthm}.
\vskip.1in

\noindent
\begin{proof}[{\it Proof of Theorem \ref{newthm}}]
By applying Theorem \ref{thm1} to $f^*_\omega$, we obtain two positive constants $A$, $B$ such that for any positive constant $C>1$, there exists a constant $r_0=r_0(\omega)>0$, for $r>r_0$,
$$
\log \sigma(r,f^*)\leq N(r, 0, f^*_\omega)+(C/A)^{1/B}(\log\log \sigma(r, f^*) )^{1/B}\qquad a.s.
$$
Since $\sigma(r,f^*)\geq\sigma(r,f)\geq e$ for all large $r$, say, $r>r_0$, and the function $y(x)=x-C_0\log x $ is increasing on $[x_0,+\infty)$, then we have
\begin{equation}\label{thm3.5-1}
\log \sigma(r,f)\leq N(r, 0, f^*_\omega)+(C/A)^{1/B}(\log\log \sigma(r, f) )^{1/B}\qquad a.s.
\end{equation}
By Jensen-Poisson formula, Lemma \ref{lem4} and Theorem \ref{fmt}, we have,
for any $r<R$,
\begin{align*}
N(r, 0, f^*_\omega)-N(r, a, f_\omega)&=\frac1{2\pi}\int_0^{2\pi} \log\left|\frac{f^*_\omega(re^{i\theta})}{f_\omega(re^{i\theta})-a}\right|\,d\theta-\log\frac{|c_{f^*_\omega}(0)|}{|c_{f_\omega}(a)|}\\
&\leq \log T(R,f_\omega)+\log\frac{R}{R-r}+O_\omega(1).
\end{align*}
It follows from Lemma \ref{lem3} that, for any $r<R$,
\begin{equation}\label{thm3.5-2}
N(r, 0, f^*_\omega)\le N(r, a, f_\omega)+\log\log \sigma(R,f_\omega)+\log\frac{R}{R-r}+O_\omega(1).
\end{equation}

Applying Lemma \ref{Borel} to functions $F(r)=\log \sigma(r,f_\omega)$, $\phi(r)=r$, $\psi(r)=\log^2r$ and $\ds R=r+\frac{r}{\psi(F(r))}$, we get
a set $E\subset[r_0,+\infty)$ of finite logarithmic measure, for all large $r$, say, 
$r>r_0$, and $r\not\in E$,
$$
\log \log \sigma(R, f_\omega)<\log  \log \sigma(r,f_\omega)+1
$$
and
$$\log \frac{R}{R-r} \le 2 \log \log\log\sigma(r, f_\omega)+\log 2.
$$

Thus, plugging above two estimates to (\ref{thm3.5-2}) gives
\begin{eqnarray}\label{thm3.5-3}
N(r, 0, f^*_\omega)  \le N(r, a, f_\omega)+\log\log \sigma(r, f_\omega)+2\log \log \log \sigma(r, f_\omega)+ O_\omega(1),
\end{eqnarray}
for $r>r_0$ and $r \notin E$.
It follows from (\ref{thm3.5-1}), (\ref{thm3.5-3}) and Lemma \ref{lem6} that there is $r_1=r_1(a, r_0)$ such that
\begin{equation}\label{thm3.5-4}
\log \sigma(r, f)\le N(r, a, f_\omega)+ (C/A)^{1/B}(\log \log\sigma(r, f))^{1/B}+(1+o(1))\log\log \sigma(r, f),
\end{equation}
for $r>r_1$ and $r \notin E$.

On the other hand, by Nevanlinna's first main theorem, Lemma \ref{lem3} and
Lemma \ref{lem6},
\begin{align*}
N(r, a, f_\omega)& \le T(r, a, f_\omega)=T(r, f_\omega)+O_\omega(1)\\
& \le \log \sigma(r, f_\omega)+O_\omega(1).\\
&\le \log\sigma(r,f)+(1+o(1))\log\log\sigma(r,f).
\end{align*}

Combining this with (\ref{thm3.5-4}) completes the proof of the theorem.
\end{proof}

\subsection{Proof of Corollaries}

\begin{proof}[{\bf Proof of Corollary \ref{cor1}}]
Since the function $x-(C/A\log x)^{1/B}$ is a increasing function for all large $x$
and Lemma \ref{lem3},
we have
$$
T(r,f)-(C/A)^{1/B}(\log T(r, f))^{1/B} \le
\log \sigma(r, f)-(C/A)^{1/B}(\log \log \sigma(r, f))^{1/B}.
$$
The rest of proof is a straightforward consequence of Theorem \ref{thm1}.
\end{proof}

\begin{proof}[{\bf Proof of Corollary \ref{cor2}}]
Let $f_\omega$ be a Gaussian entire function. By (\ref{lem4.4-1}) in the proof of Lemma \ref{lem2}, we have, for any $\tau>0$ and for any $r>0$,
$$
\ex(e^{\frac2{1+\tau}|X_r|})< \infty.
$$
So, $f_\omega \in \mathcal{Y}$ by choosing $A=2/(1+\tau)$ and $B=1$. So applying Corollary \ref{cor1} for $A=2/(1+\tau)$,  $B=1$ and $C=1+\tau$, we can finish the proof of the corollary in this case.

Let $f_\omega$ be a Steinhaus entire function. By (\ref{lem4.4-2}) in the proof of Lemma \ref{lem2}, we have, for any $\tau>0$ and for any $r>0$,
$$
\ex(e^{\frac1{1+\tau}|X_r|})< \infty.
$$
So, $f_\omega \in \mathcal{Y}$ by choosing $A=1/(1+\tau)$ and $B=1$. So applying Corollary \ref{cor1} for $A=1/(1+\tau)$,  $B=1$ and $C=1+\tau$, we can finish the proof of the corollary in this case.

Let $f_\omega$ be a Rademacher entire function. By (\ref{lem4.4-3}) in the proof of Lemma \ref{lem2}, we have, for any $\tau>0$ and for any $r>0$,
$$
\ex(e^{\frac6{eC_0} |X_r|^{1/6}})< \infty.
$$
So, $f_\omega \in \mathcal{Y}$ by choosing $A=6/{eC_0}$ and $B=1/6$. So applying Corollary \ref{cor1} for the $A$ and $B=1/6$ and $C=1+\epsilon$, we can finish the proof of the corollary in this case.
\end{proof}

\section*{Acknowledgments}

The authors would like to thank Professors P. V. Filevych,  I. Laine,  A. Nishry and M. Sodin for their help during the preparation of the manuscript.

\noindent The first named author would like to thank the University of North Carolina Wilmington for its hospitality during her visit from 2019 to 2020 and was supported by China Scholarship Council (No. 201906470026). The second named author was supported by National Natural Science Foundation of China (No. 11771090). And the third named author was supported by National Natural Science Foundation of China (No. 11901311).

\end{document}